\documentclass{amsart}
\usepackage{amsmath, amssymb}
\usepackage[utf8]{inputenc}
\usepackage[english]{babel}
\usepackage{geometry}
\usepackage{color,graphicx}

\usepackage{url}
\usepackage[colorlinks = true,
linkcolor=blue,
urlcolor=blue,
citecolor=blue,
anchorcolor=blue,
backref=page]{hyperref}
\usepackage{tikz}

\newtheorem{theorem}{Theorem}[section]
\newtheorem{lemma}{Lemma}[section]

\newtheorem{example}{Example}[section]
\newtheorem{remark}{Remark}[section]
\newtheorem{question}{Question}[section]
\newtheorem{conjecture}{Conjecture}[section]

\newcommand{\vol}{\mathrm{vol}}
\newcommand{\tomega}{\widetilde{\omega}}
\newcommand{\tN}{\widetilde{\ver}}
\newcommand{\voct}{v_{\mathrm{oct}}}
\newcommand{\vtet}{v_{\mathrm{tet}}}
\newcommand{\ver}{\mathrm{ver}}

\begin{document}
	
\title[Spectra of normalized volumes]{Spectra of normalized volumes \\ of right-angled hyperbolic polyhedra}

\author[A.~Egorov, A.~Vesnin]{A.~Egorov, A.~Vesnin} 
\address{Sobolev Institute of Mathematics, Novosibirsk, Russia} 
\email{a.egorov2@g.nsu.ru} 

\address{Sobolev Institute of Mathematics, Novosibirsk, Russia}
\email{vesnin@math.nsc.ru}

\begin{abstract} 
We consider three-dimensional hyperbolic polyhedra of finite volume with finitely many
vertices. The normalized volume of a polyhedron is the ratio of its volume to the number of vertices.
Given some set of hyperbolic polyhedra, we can associate with it the set of normalized volumes of
the polyhedra belonging to it. We call this set the spectrum of normalized volumes of the set under
consideration. We focus on right-angled hyperbolic polyhedra. For the subset of ideal polyhedra,
we find bounds for the spectrum of normalized volumes and prove that they are sharp. Moreover,
we show that the spectrum splits into discrete and dense parts. For the subset of compact polyhedra,
we obtain estimates for the spectrum of normalized volumes, prove that the upper bound is sharp,
and also establish numerical intervals on which the spectrum is discrete and dense. The endpoints of
the indicated numerical intervals are expressed in terms of the volume of the regular ideal hyperbolic
tetrahedron and the volume of the regular ideal hyperbolic octahedron.
\end{abstract}


\keywords{Lobachevsky space, hyperbolic polyhedron, right-angled polyhedron, volume of a polyhedron} 	

\subjclass[2000]{57M25}

\thanks{The authors were supported by the state task of Sobolev Institute of Mathematics, namely A.\,Yu.~Vesnin was supported by project No.~FWNF-2026-0031, and A.\,A.~Egorov was supported by project No.~FWNF-2026-0011. \\ This is a preprint of the Work accepted for publication in \textit{Siberian Mathematical Journal}, \textcopyright\ 2026, Pleiades Publishing, Ltd. An electronic link to the Publisher's website is available at: \url{http://pleiades.online/}.}

\maketitle 
	
\section{Introduction} \label{sec1}
	
The study of volumes of three-dimensional hyperbolic manifolds and hyperbolic polyhedra is an important direction in the development of modern geometry. It is of interest to describe sets of volumes and their properties for various classes of polyhedra (see, for instance,~\cite{Bel21, CGV}). On the one hand, we can fix the combinatorial structure of a polyhedron and ask for which dihedral angles the polyhedron (with finite or ideal vertices) has maximal volume. For example, by a well-known result of Milnor~\cite{Mil82}, the greatest volume of a hyperbolic $3$-simplex is attained on the regular ideal $3$-simplex, i.e., on a simplex all of whose vertices are ideal and all of whose dihedral angles are equal to one another and hence are equal to $\pi/3$. An analogous result holds in the $n$-dimensional case. As shown in~\cite{HaMu81}, the greatest volume of a hyperbolic $n$-simplex is attained on the regular ideal $n$-simplex. On the other hand, one can fix the dihedral angles and ask which polyhedron has maximal volume for a given number of faces or vertices. A hyperbolic polyhedron is called right-angled if all of its dihedral angles are equal to $\pi/2$. For the construction of three-dimensional hyperbolic manifolds corresponding to torsion-free subgroups of reflection groups of right-angled polyhedra, see~\cite{Ves17}. The aim of the present paper is to study volumes of right-angled polyhedra normalized by the number of vertices.

Following~\cite{Ves10}, for a hyperbolic polyhedron $\mathcal P$ of finite volume $\vol(\mathcal P)$ with finitely many vertices $\ver(\mathcal P)$, we define the \textit{normalized volume} as the value $\omega(\mathcal P) = \vol(\mathcal P) / \ver(\mathcal P)$. Let $\mathcal R$ be some set of hyperbolic polyhedra. The set $\Omega (\mathcal R) = \{ \omega(\mathcal P) \mid \mathcal P \in \mathcal R \}$ of normalized volumes of polyhedra from $\mathcal R$ will be called the \textit{spectrum of normalized volumes} of $\mathcal R$.

Two-sided estimates for the volumes of right-angled hyperbolic polyhedra in terms of the number of their faces were obtained by Atkinson in~\cite{At09}. A list of the $825$ compact right-angled hyperbolic polyhedra of smallest volume was given by Inoue in~\cite{In22}. A list of the $248$ smallest volume values of ideal right-angled hyperbolic polyhedra was given in~\cite{VeEg}.

Denote by $\mathcal R_{ideal}$ the set of all ideal right-angled hyperbolic polyhedra and by $\mathcal R_{comp}$ the set of all compact right-angled hyperbolic polyhedra. In both cases, we assume that the polyhedra have finite volume and finitely many vertices.

The normalized volumes of these sets and their spectra are described in the following theorems, where the constants used have the following values to six decimal places: 
$\voct=3.663862$ and $\vtet = 1.014941$. 
	
\begin{theorem} \label{theorem1}
The spectrum of normalized volumes of ideal right-angled polyhedra $\Omega (\mathcal R_{ideal}) = \{\omega(\mathcal P) \mid \mathcal P \in \mathcal{R}_{ideal}\}$ belongs to the interval $\left[ \frac{1}{6} \voct, \frac{1}{2} \voct \right]$, where the lower bound is attained on the octahedron and the upper bound is sharp. The spectrum is discrete on the interval $\left[ \frac{1}{6} \voct, \frac{1}{4} \voct \right)$ and dense on the interval $\left[ \frac{1}{4} \voct, \frac{1}{2} \voct \right]$.
\end{theorem}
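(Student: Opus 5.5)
We work throughout with the combinatorics of ideal right-angled polyhedra. Every ideal vertex of such a polyhedron $\mathcal P$ is $4$-valent. By Rivin's (equivalently Andreev's) theorem, $\mathcal P$ is determined up to isometry by its combinatorial type. Consequently, for each $N$ there are only finitely many polyhedra in $\mathcal R_{ideal}$ with $\ver(\mathcal P)=N$. The main analytic input will be Atkinson's two-sided estimate~\cite{At09} for an ideal right-angled polyhedron with $N$ vertices:
\[
\frac{N-2}{4}\,\voct \le \vol(\mathcal P) \le \frac{N-4}{2}\,\voct ,
\]
where the upper bound is asymptotically sharp. Dividing by $N$ gives
\[
\omega(\mathcal P) \ge \frac{\voct}{4}-\frac{\voct}{2N} \qquad\text{and}\qquad \omega(\mathcal P) < \frac{\voct}{2}.
\]

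\textbf{Bounds.} Since $N\ge 6$ (the octahedron is the smallest case), we have $\frac{N-2}{4}\ge \frac N6$, so $\omega(\mathcal P)\ge \voct/6$. Equality holds exactly when $N=6$, i.e. for the octahedron, whose volume is $\voct$. The upper bound $\voct/2$ is never attained. Sharpness of the upper bound, i.e. that it is a limit point of the spectrum, follows from Atkinson's sequence of polyhedra realizing the asymptotic sharpness of $\frac{N-4}{2}\voct$. These are polyhedra built from large pieces of the square-tiling / chain-link-fence pattern, for which $\vol/N\to\voct/2$.

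\textbf{Discreteness on $[\voct/6,\voct/4)$.} Fix $\varepsilon>0$. If $\omega(\mathcal P)<\voct/4-\varepsilon$, the lower bound above forces $N<\voct/(2\varepsilon)$. By rigidity there are only finitely many such $\mathcal P$. Hence the spectrum meets $[\voct/6,\voct/4-\varepsilon]$ in a finite set for every $\varepsilon$, which is exactly discreteness on $[\voct/6,\voct/4)$.

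\textbf{Density on $[\voct/4,\voct/2]$.} This is the main obstacle, and the plan is a two-parameter family interpolating between the two extremes.

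The low end is supplied by the ideal $n$-antiprisms $A_n$. They have $2n$ vertices and volume
\[
2n\big[\Lambda(\tfrac\pi4+\tfrac\pi{2n})+\Lambda(\tfrac\pi4-\tfrac\pi{2n})\big]\sim 4n\Lambda(\pi/4)=\tfrac n2\voct ,
\]
so $\omega(A_n)\to\voct/4$. The high end is supplied by polyhedra with $\omega\to\voct/2$.

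I would build polyhedra $\mathcal P_{k,m}$ consisting of a block of $k$ "fence" layers (each contributing about $\voct/2$ per vertex) joined to $m$ "antiprism" layers (each contributing about $\voct/4$ per vertex). The joining would be done by a fixed combinatorial surgery along a belt of quadrilaterals. The key claim to prove is
\[
\vol(\mathcal P_{k,m}) = k\,a + m\,b + O(1), \qquad \ver(\mathcal P_{k,m}) = k\,\alpha + m\,\beta + O(1),
\]
with $a/\alpha=\voct/2$ and $b/\beta=\voct/4$. For volume additivity I would argue by geometric convergence: as the layers grow, the polyhedra converge geometrically on compact pieces to the periodic infinite polyhedra, namely the chain-link fence and the infinite antiprism. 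Volume is then additive up to a bounded error concentrated near the junctions and the caps; if needed, this error can be controlled by Atkinson-type comparison with the reflection orbifolds.

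It then follows that $\omega(\mathcal P_{k,m})$ approximates
\[
\frac{k a+m b}{k\alpha+m\beta},
\]
and as $k,m\to\infty$ with $k/m\to t$ this ratio sweeps out every value in $[\voct/4,\voct/2]$. This gives density. The delicate point is constructing the junction surgery so that the result is still a valid ideal right-angled polyhedron, i.e. $4$-valent and satisfying Rivin's conditions with no prismatic $4$-circuits obstruction, while keeping the volume error uniformly bounded.
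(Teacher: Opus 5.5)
Your bounds and your discreteness argument are the same as the paper's. You divide Atkinson's inequality~\eqref{eqn3} by $\ver(\mathcal P)$, observe that $\omega(\mathcal P)\le C<\frac14\voct$ bounds $\ver(\mathcal P)$, and use rigidity to get finitely many polyhedra.

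The density part, however, has a genuine gap. The polyhedra $\mathcal P_{k,m}$ and the ``junction surgery'' are never constructed. So neither their realizability (Theorem~\ref{theorem2.1}) nor the estimate $\vol(\mathcal P_{k,m})=ka+mb+O(1)$ is established, and the appeal to geometric convergence is exactly the step that would need proof.

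The heuristic behind the construction also fails: there is no stackable ``antiprism layer'' of density $\frac14\voct$. Stack copies of $\mathcal A(n)$ along their $n$-gonal faces; adjacent triangles merge into quadrilaterals, since $\pi/2+\pi/2=\pi$. Each new copy then adds $n$ vertices and volume $\vol(\mathcal A(n))\sim\frac n2\voct$, which is density $\frac12\voct$. A $k$-fold stack has $\omega\to\frac{k}{k+1}\cdot\frac12\voct$ as $n\to\infty$. The value $\frac14\voct$ for a single antiprism is a boundary effect of its two large $n$-gonal faces, not a bulk density. Layering therefore produces only a discrete set of limits, and you still need a genuine mixing operation.

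The missing idea is an operation under which volume is \emph{exactly} additive: connected sum along ideal triangular faces. All ideal triangles are isometric, and by~\eqref{eqn1} every polyhedron in $\mathcal R_{ideal}$ has at least eight triangles, so any two can be glued along triangles. The faces adjacent to the glued triangle merge (dihedral angle $\pi$), and the result is again ideal right-angled. Volume adds, and $\ver$ drops by exactly $3$.

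Consequently $\tN=\ver-3$ is additive, and $\tomega=\vol/\tN$ of a sum is the $\tN$-weighted average of the summands' values. In particular $\omega(\#_k\mathcal P)\to\tomega(\mathcal P)$ as $k\to\infty$. Now glue $k$ copies of $\mathcal A(n)$, where $\tomega(\mathcal A(n))\to\frac14\voct$, to $m$ copies of Atkinson's $Q_i$, where $\tomega(Q_i)\to\frac12\voct$, with the ratio $k/m$ prescribed. This gives density on $[\frac14\voct,\frac12\voct]$ with no error terms to control.
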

	
\begin{theorem} \label{theorem2}
The spectrum of normalized volumes of compact right-angled polyhedra $\Omega (\mathcal R_{comp}) =  \{\omega(\mathcal P) \mid \mathcal P \in \mathcal{R}_{comp}\}$ belongs to the interval $\left[ \frac{5}{192} \voct,  \frac{5}{8} \vtet \right)$, where the upper bound is sharp. On the interval $\left[ \frac{5}{192} \voct, \frac{1}{32} \voct \right)$ the spectrum is discrete, while on the interval $\left[ \frac{5}{16} \vtet, \frac{5}{8} \vtet \right]$, it is dense.
\end{theorem}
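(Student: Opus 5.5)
The plan is to combine Atkinson's two-sided volume estimates for compact right-angled polyhedra with elementary combinatorics. This gives the bounds and the discreteness. Density and sharpness then come from explicit families obtained by gluing right-angled polyhedra along faces.

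\textbf{Combinatorics and bounds.} A compact right-angled hyperbolic polyhedron $\mathcal P$ is simple and every face has at least five sides. By Euler's formula, with $V=\ver(\mathcal P)$ and $F$ the number of faces, we get $F=V/2+2$, and $V\ge 20$ with equality only for the dodecahedron.

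I would invoke Atkinson's estimates~\cite{At09}, written in terms of $V$ (equivalently $F$), in the form
\[
c_1(V)\,\voct \le \vol(\mathcal P) < (V-10)\,\tfrac{5}{8}\,\vtet ,
\]
where $c_1(V)$ is the linear lower bound from~\cite{At09}. Dividing by $V$ gives the upper estimate $\omega(\mathcal P)<\frac58\vtet(1-10/V)<\frac58\vtet$, and it also gives a lower estimate of the form $\omega(\mathcal P)\ge \frac{1}{32}\voct\,(1-a/V)$.

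Taking the smallest admissible $V$ in the correct normalisation of Atkinson's bound should give exactly $\frac{5}{192}\voct$. I would check this against the exact constant in~\cite{At09}. Independently, I would check the dodecahedron value $\approx 4.306/20$ to see that the bound is consistent.

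\textbf{Discreteness.} Fix any $c<\frac{1}{32}\voct$. Then $\omega(\mathcal P)\le c$ forces
\[
1-a/V\le 32c/\voct<1,
\]
so $V$ is bounded. There are only finitely many combinatorial types of simple polyhedra with a bounded number of vertices. By Andreev's theorem (Mostow rigidity), each type carries at most one right-angled realisation. Hence only finitely many values of $\omega$ lie in $[\frac{5}{192}\voct,c]$, which proves discreteness on $[\frac{5}{192}\voct,\frac{1}{32}\voct)$.

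\textbf{Sharpness and density.} The tool is gluing two compact right-angled polyhedra along congruent faces, for example $k$-gonal faces. Since the dihedral angles $\pi/2+\pi/2=\pi$ straighten, the result is again a compact right-angled polyhedron. Its volume is $\vol_1+\vol_2$, and its vertex number is $V_1+V_2-2k$, since the $k$ vertices of each copy of the glued face disappear.

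\emph{Upper sharpness.} I would use a family such as the Löbell polyhedra $L(n)$, or towers obtained by repeatedly gluing them along their $n$-gonal faces. For these, the known asymptotics of volume (via Lobachevsky functions, with dihedral-angle limits tending to ideal regular tetrahedral pieces) should give $\omega\to\frac58\vtet$. Alternatively, one can use Atkinson's own sequence showing his upper bound is asymptotically sharp.

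\emph{Density.} I need a second family $\mathcal Q_m$ with $\omega(\mathcal Q_m)\to\frac{5}{16}\vtet$, or to any value $\le\frac{5}{16}\vtet$. Then, for a target $t\in[\frac5{16}\vtet,\frac58\vtet]$, I glue $p$ pieces from the high family and $q$ pieces from the low family in a chain. The boundary losses $2k$ per gluing are $o(\text{total }V)$ if the pieces are large. So the normalized volume is essentially the weighted average
\[
\frac{p\,\vol_{\mathrm{high}}+q\,\vol_{\mathrm{low}}}{p\,V_{\mathrm{high}}+q\,V_{\mathrm{low}}},
\]
and these averages fill the interval densely as $p,q$ vary.

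\textbf{Main obstacle.} I expect the hardest step to be producing the low family with normalized volume tending to $\frac{5}{16}\vtet$, and controlling the error terms uniformly in the gluing. My first attempt is to glue Löbell-type pieces along faces so that half of the asymptotic volume contribution is lost while the vertex count is kept. For instance, I would insert ``thin'' prism-like layers whose volume per vertex is half the generic value, computed via the Lobachevsky function from the explicit geometry of $L(n)$ as $n\to\infty$. I would then verify the two limits numerically before completing the interpolation argument.
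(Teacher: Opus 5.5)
Your discreteness argument, the strict upper estimate, and its sharpness via Atkinson's sequence $\{R_i\}$ match the paper. Two steps, however, have genuine gaps.

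\textbf{The lower bound.} The constant $\frac{5}{192}\voct$ does not come from the smallest admissible number of vertices. The lower estimate in~\eqref{eqn7} gives $\omega(\mathcal P)\ge\frac{\voct}{32}\bigl(1-\frac{8}{\ver(\mathcal P)}\bigr)$. At $\ver(\mathcal P)=20$ this is only $\frac{3}{160}\voct\approx 0.0687$. The value $\frac{5}{192}\voct=\frac{\voct}{32}\cdot\frac{40}{48}$ is what the bound gives at $\ver(\mathcal P)=48$. The paper uses~\eqref{eqn7} only for $\ver(\mathcal P)\ge 48$. It covers the finitely many cases $20\le\ver(\mathcal P)\le 46$ with the computed minima in Table~\ref{table2}, which come from a complete enumeration and all exceed $\frac{5}{192}\voct\approx 0.0954$. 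Checking the dodecahedron alone leaves every polyhedron with $24\le\ver(\mathcal P)\le 46$ vertices uncovered. So as written your proposal does not prove the lower bound, and the check you plan against Atkinson's constant would not close it.

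\textbf{The density part.} You attribute the limit $\frac58\vtet$ to the L\"obell polyhedra themselves. In fact~\eqref{eqn6} with $k=1$ gives $\omega(\mathcal L(n))\to\frac{5}{16}\vtet$, while a tower gives $\frac{k}{k+1}\cdot\frac58\vtet$, so $\frac58\vtet$ requires both $k,n\to\infty$. The ``low family'' you single out as the main obstacle is therefore just $\mathcal L(n)$. The unspecified ``thin prism-like layers'' are not needed, and as written they are not a construction.

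The more serious omission is that gluing requires isometric faces. Right-angled hyperbolic pentagons form a two-parameter family. Faces of the same combinatorial type in unrelated polyhedra, for example in Atkinson's $R_i$ and in some low family, are generally not congruent, so your chains need not exist. The paper fixes $n>6$ and glues copies of $\mathcal L(n)$ and $\mathcal L_k(n)$ along the lateral pentagons adjacent to the $n$-gonal bases, which are all isometric. Each pentagonal gluing destroys at most $12$ such faces, while each new piece brings $2n\ge 14$. Hence the supply never runs out and arbitrarily long chains exist.

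Gluing instead along the common $n$-gonal bases would only produce towers. Those have essentially the discrete values $\frac{k}{k+1}\cdot\frac58\vtet$, and the loss of $2n$ vertices per gluing is comparable to the $4n$ vertices of a piece. So your $o(V)$ error estimate really needs gluing along pentagons. With these repairs, your approximate weighted-average argument is equivalent to the paper's exact one using the additive count $\tN=\ver-10$ and the weighted average $\tomega$.
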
 
		
The behavior of the spectrum $\Omega (\mathcal R_{comp})$ on the interval $\left[ \frac{1}{32} \voct, \frac{5}{16} \vtet \right]$ remains unclear; see Question~\ref{question3}.		

Note the relation between the obtained results and the theory of hyperbolic knots and links. For a hyperbolic link $L \subset S^3$, consider the volume of its complement endowed with the complete metric of constant negative curvature and denote it by $\vol(L)$. Let $c(L)$ be the minimum number of crossings among all diagrams of the link $L$. In~\cite{CKP2016}, Champanerkar, Kofman, and Purcell defined the \textit{volume density} of a link as follows: $\operatorname{vd} (L) = \vol(L) / c(L)$. It is known that $\operatorname{vd} (L) \leq \voct$. A sequence of links $L_n$ such that $c(L_n) \to \infty$ is called \textit{geometrically maximal} if $\lim_{n\to \infty} \operatorname{vd} (L_n) = \voct$. Examples of geometrically maximal sequences of links were constructed in~\cite{CKP2016}.

Subsequently, Burton~\cite{Bur2016} showed that the set of volume densities of all hyperbolic links is dense in the interval $\left[ 0, \voct \right]$. Adams, Calderon, and Mayer showed in~\cite{ACM} that the set of volume densities of hyperbolic links in thickened surfaces $S_g \times I$, where $g \geq 2$, is dense in the interval $\left[0, 2\voct \right]$.

In~\cite{KwonTham}, Kwon and Tham studied a special class of hyperbolic links called fully augmented links, or FAL for short. For them, the natural measure of complexity is the number of augmentations, which are also called the vertical components of a link. For this class, Kwon and Tham defined volume density as the ratio of the volume of the link complement to the number of its vertical components and showed that the set of densities lies in the interval $\left[ \voct,10\vtet \right]$, with both bounds sharp. Moreover, they established that the spectrum is discrete in $\left[ \voct, 2\voct \right)$ and dense in $\left[ 2\voct, 10\vtet \right]$.
	
There is a natural geometric relation between fully augmented links and ideal right-angled polyhedra: the complement of every such link decomposes into two isometric ideal right-angled polyhedra~\cite{La04,Pur2011,VeEg24} (for an analogous decomposition of complements of links in the thickened torus, see~\cite{Kw}). In this case, the volume of the link is equal to twice the volume of the polyhedron, while the number of vertical components of the link is related to the number of vertices of the polyhedron. Consequently, using a suitable normalization, one can embed the spectrum of volume densities of links into the spectrum of normalized volumes of polyhedra. However, the class of ideal right-angled polyhedra is substantially broader than the subclass of polyhedra arising from augmented links, and therefore the question of describing the volume spectrum of the class of all ideal right-angled hyperbolic polyhedra naturally arises.
	
The paper is organized as follows.
In Section~\ref{prelim-ideal}, we give the main definitions and properties of ideal right-angled hyperbolic polyhedra. Section~\ref{prelim-compact} is devoted to compact right-angled hyperbolic polyhedra, in particular, to L\"obell polyhedra. In Sections~\ref{main-ideal} and~\ref{main-compact}, we prove Theorems~\ref{theorem1} and~\ref{theorem2}, respectively. In Section~\ref{comparison}, we compare the obtained results on normalized volumes of ideal and compact right-angled polyhedra with the results of Kwon and Tham on the volume density spectrum of augmented links. In conclusion, we formulate some open questions and one conjecture.
	
\section{Ideal right-angled polyhedra} \label{prelim-ideal}
	
An \textit{abstract polyhedron} is a cell complex on $S^2$ that can be realized as a convex polyhedron in three-dimensional Euclidean space. By the classical theorem of Steinitz (see, for instance,~\cite[Ch.~1]{Pra}), realizability of a cell complex as a convex Euclidean polyhedron is equivalent to the condition that its one-dimensional skeleton is a $3$-connected graph without loops and multiple edges. Recall that a graph is called $k$-connected if it contains at least $k+1$ vertices and any two of its distinct vertices can be joined by at least $k$ independent paths. By the Menger--Whitney theorem, a graph containing at least $k+1$ vertices is $k$-connected if and only if the deletion of any $k-1$ of its vertices (and the edges incident to them) results in a connected graph. Denote by $\operatorname{E} (P)$ the set of edges of an abstract polyhedron $P$. A \textit{labeling} of an abstract graph $P$ is a mapping $\mathcal E : \operatorname{E}(P) \to (0,\pi/2]$. The pair $(P, \mathcal E)$ is called a \textit{labeled abstract polyhedron}. We will say that a labeled abstract polyhedron is \textit{realized as a hyperbolic polyhedron} if there exists a polyhedron $\mathcal P$ in hyperbolic space $\mathbb H^3$ such that there is a label-preserving graph isomorphism taking the one-dimensional skeleton $\mathcal P^{(1)}$, with its edges labeled by the dihedral angles, to the labeled abstract polyhedron $(P, \mathcal E)$. Below we consider polyhedra in $\mathbb H^3$ all of whose dihedral angles are equal to $\pi/2$ and call them \textit{right-angled}. A vertex of a polyhedron in $\mathbb H^3$ is called \textit{ideal} if it lies on the absolute $\partial \mathbb H^3$. For more details on polyhedra in the space $\mathbb H^3$, see~\cite{AVS}.

Let $G$ be a planar graph and let $G^*$ be the graph dual to $G$. A simple closed curve composed of $k \geq 3$ edges in $G^*$ is called a \textit{$k$-circuit}. A \textit{prismatic $k$-circuit} is a $k$-circuit $\gamma$ such that no two edges of the graph $G$ crossing $\gamma$ have a common vertex.

Necessary and sufficient conditions for the realization in $\mathbb H^3$ of a labeled abstract polyhedron of finite volume were obtained in the works of E.~M.~Andreev~\cite{Andreev1, Andreev2} (see also~\cite{RHD}). We state the result for the special case of right-angled polyhedra, following~\cite{At09}. In this case, the necessary and sufficient conditions are purely combinatorial.
	
\begin{theorem} \label{theorem2.1}
A right-angled abstract polyhedron $(P, \mathcal E)$ is realized as a hyperbolic polyhedron, denoted by $\mathcal P$, if and only if the following conditions hold:
\begin{itemize}
\item[(1)] $P$ has at least six faces;
\item[(2)] each vertex in $P$ has valency $3$ or $4$; 
\item[(3)] for all triples of faces $(F_i, F_j, F_k)$ in $P$ such that $F_i \cap F_j$ and $F_j \cap F_k$ are edges in $P$ with distinct endpoints, we have $F_i \cap F_k = \emptyset$;
\item[(4)] $P^*$ has no prismatic $4$-circuits. 
\end{itemize}
Moreover, every 3-valent vertex in $P$ corresponds to a finite vertex in $\mathcal P$, every 4-valent vertex in $P$ corresponds to an ideal vertex in $\mathcal P$, and the realization is unique up to isometry.
\end{theorem}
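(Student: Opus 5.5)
The plan is to obtain the statement as a direct specialization of Andreev's theorem~\cite{Andreev1, Andreev2} (in the corrected form of~\cite{RHD}) for finite-volume polyhedra with non-obtuse dihedral angles. I would set every label equal to $\pi/2$ and translate each of Andreev's angle inequalities into a purely combinatorial condition on $P$. I would first recall Andreev's conditions for a labeled abstract polyhedron $(P,\mathcal E)$ with $\mathcal E(e)\in(0,\pi/2]$, allowing ideal vertices:
\begin{itemize}
\item[(a)] at a $3$-valent vertex the angle sum is $\ge\pi$, with equality exactly for an ideal vertex;
\item[(b)] at a $4$-valent vertex the angle sum is $2\pi$, and such a vertex is ideal;
\item[(c)] for a prismatic $3$-circuit the angle sum is $<\pi$;
\item[(d)] for a prismatic $4$-circuit the angle sum is $<2\pi$;
\item[(e)] when $F_i,F_j$ and $F_j,F_k$ are adjacent but $F_i,F_k$ are not adjacent and meet only at a vertex, a strict inequality forbids all three angles being $\pi/2$;
\item[(f)] the triangular prism is treated as a separate exceptional case.
\end{itemize}

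In the first step I would handle the vertices. The link of a vertex is a spherical polygon (finite vertex) or a Euclidean polygon (ideal vertex) whose angles are the dihedral angles. With all angles equal to $\pi/2$, a $3$-valent link has angle sum $3\pi/2>\pi$, so it is a spherical triangle and the vertex is finite. A $4$-valent link has angle sum $2\pi$, so it is a Euclidean rectangle and the vertex is ideal. Valency $\ge5$ gives angle sum $\ge 5\pi/2>2\pi$, which is impossible for a Euclidean polygon and forbidden by Andreev's conditions. This yields condition (2) together with the final clause about finite and ideal vertices.

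In the second step I would handle the circuits. A prismatic $3$-circuit would have angle sum $3\pi/2\not<\pi$, so none may exist. A prismatic $4$-circuit has sum $2\pi\not<2\pi$, so none may exist, and this is exactly condition (4). Condition (3) I would match with Andreev's condition (e) on triples of faces: for right angles that condition forbids exactly $F_i\cap F_k\neq\emptyset$ when the edges $F_i\cap F_j$ and $F_j\cap F_k$ have distinct endpoints. I would then check that condition (3) also rules out prismatic $3$-circuits, since in such a circuit the three faces pairwise meet along edges with no shared vertex, so (3) is violated. The same argument excludes triangular faces: the three faces around a right-angled triangle would violate (3). Combining this with the exclusion of the prism and the tetrahedron, and using Euler-characteristic counting with (2), I expect to get condition (1), at least six faces. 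Conversely, I would verify that (1)--(4) imply every one of Andreev's hypotheses, including the exceptional prism clause.

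Uniqueness up to isometry would come directly from the uniqueness part of Andreev's theorem (equivalently, Mostow--Prasad rigidity for the associated reflection group). The step I expect to be the main obstacle is the precise bookkeeping between Andreev's condition for faces meeting at an ideal vertex and the formulation of condition (3), together with extracting the bound of six faces from the exceptional cases. For this I would follow Atkinson's treatment~\cite{At09} closely and check small cases, such as the octahedron and the cube, by hand.
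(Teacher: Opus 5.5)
Your overall route is the right one. The paper gives no argument of its own; it quotes the statement, following Atkinson, as the right-angled case of Andreev's finite-volume theorem. Your treatment of vertex links, prismatic $3$- and $4$-circuits, the matching of (3) with (c) and (e), and uniqueness is sound. One precision: when you write out (e), you must include the hypothesis that the common vertex of $F_i$ and $F_k$ does not lie on $F_j$. Otherwise (e) would forbid every ideal vertex, since at a $4$-valent vertex two opposite faces meet only there and are both adjacent to a third face through it. Your reformulation via ``distinct endpoints'' already builds this in.

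The genuine problem is in your derivation of condition (1): condition (3) does \emph{not} exclude triangular faces. The octahedron $\mathcal A(3)$ satisfies (1)--(4), is realized, and all its faces are triangles. Any two edges of a triangle share an endpoint, and the two faces adjacent to a triangle along the edges through a $4$-valent vertex meet only at that vertex, so the hypothesis of (3) is never triggered. Geometrically, face angles are $\pi/2$ at finite vertices and $0$ at ideal ones. So a triangular face can have at most one finite vertex (as in the triangular bipyramid), but such faces are allowed. Consequently the Euler-characteristic count, which needs every face to have at least four sides, does not yield $F \ge 6$.

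Instead, check directly the three combinatorial types with at most five faces:
\begin{itemize}
\item The triangular prism contains a prismatic $3$-circuit (its lateral faces).
\item In the quadrilateral pyramid, two opposite lateral faces meet at the apex while meeting the base along disjoint edges, which violates (3).
\item The tetrahedron satisfies (2)--(4) and lies outside the scope of Andreev's theorem, which assumes more than four faces, so it needs a separate geometric argument. All its vertices are $3$-valent, hence finite with face angles $\pi/2$ (the link is the spherical triangle with all angles and sides $\pi/2$). Each face would then be a hyperbolic triangle with three right angles, which is impossible. Equivalently, its Gram matrix is the identity, which is positive definite rather than of signature $(3,1)$.
\end{itemize}
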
 
	
An \textit{ideal right-angled polyhedron} is a right-angled hyperbolic polyhedron all of whose vertices are ideal. Ideal right-angled polyhedra are naturally related to knot theory, since the complements of hyperbolic fully augmented links decompose into pairs of isometric ideal right-angled polyhedra (see~\cite{La04, Pur2011,VeEg24}).
	
For a polyhedron $\mathcal P \in \mathcal R_{ideal}$, denote by $F$ the number of faces, by $V$ the number of vertices, and by $p_k$ the number of $k$-gonal faces, where $k \geq 3$. Then $F = \sum_{k\geq 3} p_k$. Since all vertices of the polyhedron are 4-valent, Euler's formula for polyhedra implies that $F = V + 2$, whence
\begin{equation} \label{eqn1}
p_3 = 8 + \sum_{k \geq 5} (k-4) p_k. 
\end{equation} 	
In particular, the ideal right-angled polyhedron with the smallest number of faces is the octahedron. 	
	
In the following example, we describe an infinite family of ideal right-angled polyhedra generalizing the octahedron, which will play an important role in our considerations.
	
\begin{example} \label{ex:antiprism}
{\rm 
For $n \ge 3$, an \textit{$n$-antiprism} $A(n)$ is a $(2n+2)$-hedron with upper and lower $n$-gonal bases and with lateral surface formed by two layers of $n$ triangles each, in which four edges meet at every vertex. The Schlegel diagrams of the antiprisms $A(3)$ and $A(4)$ are shown in Fig.~\ref{fig1}. In particular, $A(3)$ is the octahedron. Realizability of right-angled antiprisms as ideal hyperbolic polyhedra follows from Theorem~\ref{theorem2.1}. A right-angled hyperbolic $n$-antiprism will be denoted by $\mathcal A(n)$.
}
\end{example}
\begin{figure}[ht]
\begin{center}
\unitlength=.1mm
\begin{tikzpicture}[scale=0.4] 
\unitlength=10.mm
  \fill[black]  (-16,6) circle (5pt) ;
  \fill[black]  (-15,4) circle (5pt) ;
  \fill[black]  (-17,4) circle (5pt) ;
  \fill[black]  (-16,1) circle (5pt) ;
  \fill[black]  (-20,7) circle (5pt) ;
  \fill[black]  (-12,7) circle (5pt) ;
\draw [very thick, black] (-20,7)-- (-12,7) -- (-16,1) -- cycle;
\draw [very thick, black] (-15,4)-- (-16,6) -- (-17,4) -- cycle;
\draw [very thick, black] (-16,1)-- (-17,4) -- (-20,7) -- (-16,6) -- (-12,7) -- (-15,4) -- cycle;
\draw(-16,-1) node {$A(3)$}; 
  \fill[black]  (0,5) circle (5pt) ;
  \fill[black]  (-8,5) circle (5pt) ;
  \fill[black]  (-4,1) circle (5pt) ;
  \fill[black]  (-4,9) circle (5pt) ;
\draw [very thick, black] (0,5)-- (-4,9) -- (-8,5) -- (-4,1) -- cycle;
  \fill[black]  (-3,4) circle (5pt) ;
  \fill[black]  (-5,4) circle (5pt) ;
  \fill[black]  (-3,6) circle (5pt) ;
  \fill[black]  (-5,6) circle (5pt) ;
\draw [very thick, black] (-3,4)-- (-3,6) -- (-5,6) -- (-5,4) -- cycle;
\draw [very thick, black] (-3,4)-- (0,5) -- (-3,6) -- (-4,9) --(-5,6) -- (-8,5) -- (-5,4) -- (-4,1) -- cycle;
\draw(-4,-1) node {$A(4)$};
  \fill[black]  (4,5) circle (5pt) ;
  \fill[black]  (12,5) circle (5pt) ;
  \fill[black]  (8,1) circle (5pt) ;
  \fill[black]  (8,9) circle (5pt) ;
\draw [very thick, black] (4,5)-- (8,9) -- (12,5) -- (8,1) -- cycle;
\draw [very thick, red] (8,5) circle[radius=0.1cm];
\draw [very thick, red] (7,4)-- (8,5) -- (7,6);
\draw [very thick, red] (9,4)-- (8,5) -- (9,6);
\draw [very thick, black]  (7,6) -- (9,6);
\draw [very thick, black] (9,4) -- (7,4);
\draw [very thick, black] (7,4)-- (4,5) -- (7,6) -- (8,9) --(9,6) -- (12,5) -- (9,4) -- (8,1) -- cycle;
  \fill[black]  (7,4) circle (5pt) ;
  \fill[black]  (9,4) circle (5pt) ;
  \fill[black]  (7,6) circle (5pt) ;
  \fill[black]  (9,6) circle (5pt) ;
\draw(8,-1) node {$A(4)^*$};
\end{tikzpicture}
\end{center}
\caption{Antiprisms $A(3)$ and $A(4)$, twisted antiprism $A(4)^*$.} \label{fig1}
\end{figure}
		
Following~\cite{Bri}, we define an operation called the \textit{edge-twist}. The operation is as follows. Let $P$ be an abstract polyhedron having in some face four distinct 4-valent vertices forming two pairs of adjacent vertices joined by edges $e_1$ and $e_2$. Apply a transformation in the face consisting of deleting the edges $e_{1}$ and $e_{2}$, creating a new vertex $v$, and joining this vertex by edges to the four vertices under consideration. The polyhedron obtained as a result of this transformation is denoted by $P^{*}$. We will say that $P^{*}$ is obtained from $P$ by an \textit{edge-twist}. In Fig.~\ref{fig2}, we indicate the edges $e_{1}$ and $e_{2}$ involved in the twisting and the new vertex $v$ with the edges incident to it.
\begin{figure}[ht]
\centering
\setlength{\unitlength}{1.0mm}
 \begin{tikzpicture}[scale=1.2] 
\coordinate (1) at (0, 0) ;
  \coordinate (2) at (1, 0) ;
  \coordinate (3) at (0, 1) ;
  \coordinate (4) at (1, 1) ;
  \draw[very thick, black] (1) -- (3) ;
  \draw[very thick, black] (2) -- (4) ; 
    \draw[thick](0,1) -- (0.2,1.2);
    \draw[thick](0,1) -- (-0.2,1.2);
    \draw[thick](0,1) -- (-0.2,0.8);
    \draw[thick](1,1) -- (0.8,1.2);
    \draw[thick](1,1) -- (1.2,1.2);
    \draw[thick](1,1) -- (1.2,0.8);
    \draw[thick](1,0) -- (1.2,0.2);
    \draw[thick](1,0) -- (1.2,-0.2);
    \draw[thick](1,0) -- (0.8,-0.2);
    \draw[thick](0,0) -- (0.2,-0.2);
    \draw[thick](0,0) -- (-0.2,-0.2);
    \draw[thick](0,0) -- (-0.2,0.2);
  \fill[black] (1) circle (2pt) ;
  \fill[black] (2) circle (2pt) ; 
  \fill[black] (3) circle (2pt) ;
  \fill[black] (4) circle (2pt) ;  
  \draw[-stealth,thick] (2,0.5) -- (3, 0.5);
\coordinate (6) at (4, 0) ;
  \coordinate (7) at (5, 0) ;
  \coordinate (8) at (4, 1) ;
  \coordinate (9) at (5, 1) ;
  \coordinate (5) at (4.5, 1/2) ;
  \draw[very thick, red] (6) -- (5) ;
  \draw[very thick, red] (5) -- (7) ;
  \draw[very thick, red] (8) -- (5) ; 
  \draw[very thick, red] (5) -- (9) ; 
  \fill[black] (6) circle (2pt) ;
  \fill[black] (7) circle (2pt) ; 
  \fill[black] (8) circle (2pt) ;
  \fill[black] (9) circle (2pt) ;
  \fill[red] (5) circle (2pt) ; 
      \draw[thick](4,1) -- (4.2,1.2);
    \draw[thick](4,1) -- (3.8,1.2);
    \draw[thick](4,1) -- (3.8,0.8);
    \draw[thick](5,1) -- (4.8,1.2);
    \draw[thick](5,1) -- (5.2,1.2);
    \draw[thick](5,1) -- (5.2,0.8);
    \draw[thick](5,0) -- (5.2,0.2);
    \draw[thick](5,0) -- (5.2,-0.2);
    \draw[thick](5,0) -- (4.8,-0.2);
    \draw[thick](4,0) -- (4.2,-0.2);
    \draw[thick](4,0) -- (3.8,-0.2);
    \draw[thick](4,0) -- (3.8,0.2); 
    \draw(4.5,0.25) node {$v$};
     \draw(-0.25,0.5) node {$e_1$};
      \draw(1.25,0.5) node {$e_2$};
         \draw(0.5,-0.5) node {$P$};
           \draw(4.5,-0.5) node {$P^*$};
    \end{tikzpicture}
    \caption{Edge-twist of the edges $e_1$ and $e_2$.}
    \label{fig2} 
\end{figure}

It follows from Theorem~\ref{theorem2.1} that if $\mathcal P \in \mathcal R_{ideal}$, then $\mathcal P^* \in \mathcal R_{ideal}$. The following result shows that every polyhedron from $\mathcal{R}_{ideal}$ is an antiprism or can be obtained from an antiprism by finitely many edge-twist operations.
	
\begin{theorem} \cite{Bri} \label{theoremBri}  \label{theorem2.2} 
The class of polyhedra $\mathcal{R}_{ideal}$ is generated by the antiprisms $\mathcal A(n)$, $n \geq 3$, and edge-twist operations. 
\end{theorem}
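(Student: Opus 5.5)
We argue in the reverse direction: we show that every $\mathcal P \in \mathcal R_{ideal}$ that is not an antiprism admits an \emph{inverse} edge-twist, i.e.\ contains a 4-valent vertex $v$ whose removal yields a polyhedron $P'$ that again satisfies the conditions of Theorem~\ref{theorem2.1}. Removing $v$ means deleting $v$ with its four edges and reconnecting its four neighbours in pairs by two new edges $e_1,e_2$. Since $\ver(P')=\ver(P)-1$, induction on the number of vertices then reduces every polyhedron to an antiprism. The forward direction is already noted in the paper: edge-twists preserve $\mathcal R_{ideal}$ by Theorem~\ref{theorem2.1}.

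\textbf{Locating the candidate vertex.} Formula~(\ref{eqn1}) gives $p_3 \ge 8$. If $\mathcal P$ has no faces other than triangles and quadrilaterals, then $p_3=8$. The candidate for the inverse twist is a vertex $v$ at which two non-adjacent faces around $v$ are triangles, so that $v$ is the common apex of two triangles $v a b$ and $v c d$. Deleting $v$ and adding the edges $ab$ and $cd$ then merges the other two faces at $v$ into one face, exactly reversing Fig.~\ref{fig2}. We would classify vertices by the cyclic pattern of face sizes around them. We then show that if no vertex has opposite triangles, every triangle shares all its vertices with large faces in an alternating pattern. In that case the polyhedron decomposes as a single band of triangles between two faces, which forces it to be an antiprism $A(n)$. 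A counting argument with (\ref{eqn1}) and Euler's formula ($F=V+2$, $E=2V$) is needed to rule out other global arrangements, such as several bands.

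\textbf{Checking realizability after the inverse move.} We verify the conditions of Theorem~\ref{theorem2.1} for $P'$. Valency 4 is preserved. The face count $F' = F-1 \ge 6$ holds unless $P$ is the octahedron, which is $A(3)$. The dangerous conditions are (3) and (4). The merged face could create a prismatic 4-circuit. The new edges $ab$ and $cd$ could be multiple edges or violate 3-connectivity, for instance if $a$ and $b$ were already adjacent via another face. This step is the main obstacle. We expect that a bad choice of $v$ can fail, so we must pick $v$ carefully among all candidates. The plan is to argue by contradiction: if every candidate vertex produced a prismatic 4-circuit or a multi-edge in $P'$, then these short circuits in $P$ would cut $P$ into pieces. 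Applying the argument to an innermost piece gives a valid candidate, or else shows that $P$ is an antiprism. Since this is the result of~\cite{Bri}, we would follow that reference for the precise case analysis of which twist to undo.
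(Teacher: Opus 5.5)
The paper gives no proof of this statement; it is quoted from~\cite{Bri}, so your proposal has to stand on its own. Your reduction strategy has the right shape: show that every non-antiprism admits an inverse edge-twist that stays in $\mathcal R_{ideal}$, then induct on $\ver$. However, the move you propose to undo is not the inverse of an edge-twist. If $vab$ and $vcd$ are triangular faces, then $ab$ and $cd$ are already edges. Deleting $v$ and adding $ab$, $cd$ therefore collapses both triangles to digons, and $P'$ always has double edges, whatever $v$ you choose.

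Compare with Fig.~\ref{fig2}. Suppose the twist is performed in a face $F$ with boundary $a\,b\dots c\,d\dots$, where $e_1=ab$ and $e_2=cd$. The faces at the new vertex $v$ are then the two pieces into which $F$ is cut, which are opposite at $v$, and the faces $F_1,F_2$ across $e_1,e_2$, each enlarged by $v$, which are also opposite. Undoing the twist merges the two pieces of $F$ and shrinks the enlarged $F_1$, $F_2$ back by one vertex each. So a necessary local condition is that $v$ has a pair of opposite faces with at least four sides each. Triangles at $v$ can only occur in the merging pair, so you have the roles of the two opposite pairs reversed.

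This also breaks your dichotomy. In $\mathcal A(n)$, $n\ge 4$, the faces around every vertex are, cyclically, three triangles and a base. Hence every vertex has two opposite triangles, yet no inverse twist is possible anywhere, because each opposite pair contains a triangle. ``Some vertex has opposite triangles'' therefore neither triggers a reduction nor separates non-antiprisms from antiprisms.

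Second, even with the move corrected, the real content of the theorem is the step you leave open. You must show that every non-antiprism has a vertex with an opposite pair of faces of size at least $4$ for which the reduced graph is still simple and $3$-connected and satisfies conditions (3) and (4) of Theorem~\ref{theorem2.1}. Your sketch (``short circuits cut $P$ into pieces; apply the argument to an innermost piece'') leaves three things unspecified:
\begin{itemize}
\item what the pieces are;
\item why a reduction performed inside a piece remains valid in all of $P$;
\item why the process can only stop at an antiprism.
\end{itemize}
Your last sentence hands exactly this case analysis back to~\cite{Bri}. As written, the proposal establishes only the easy direction, that edge-twists preserve $\mathcal R_{ideal}$, which the paper already records.
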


Traditionally, volumes of polyhedra in three-dimensional hyperbolic space are computed using the \textit{Lobachevsky function} $\Lambda(\theta)$ introduced by Milnor~\cite{Mil82},
$$
\Lambda(\theta) = -\int_0^\theta \log |2\sin t| dt. 
$$ 
As shown by Thurston~\cite{Th80}, the volume of the right-angled hyperbolic $n$-antiprism $\mathcal A(n)$, $n \geq 3$, is expressed by the following formula:
\begin{equation} \label{eqn2}
\vol(\mathcal A(n)) = 2n \left[ \Lambda\left( \frac{\pi}{4} + \frac{\pi}{2n} \right) + \Lambda\left( \frac{\pi}{4} - \frac{\pi}{2n} \right) \right].
\end{equation} 
Below, all approximate values of the Lobachevsky function and volumes of polyhedra will be given to six decimal places, without indicating the omitted digits by an ellipsis.
In particular, for $n=3$ we obtain the volume of the ideal right-angled octahedron:
$$
\voct = \vol(\mathcal A(3)) = 8\Lambda(\pi/4) = 3.663862.
$$	
It follows from formula~\eqref{eqn2} that $\displaystyle \lim_{n\to\infty}\frac{\vol(\mathcal A(n))}{\ver(\mathcal A(n))}=\frac{\voct}{2}$.

In Fig.~\ref{fig3}, we show a graph representing five generations of polyhedra obtained from $\mathcal A(4)$ by edge-twist operations. The vertices of the graph correspond to the polyhedra $\mathcal P_1, \ldots, \mathcal P_{24}$, where $\mathcal P_1= \mathcal A(4)$. Two vertices are joined by an edge if the polyhedra corresponding to these vertices are related by one edge-twist transformation. \begin{figure}[ht]
	\centering
\begin{tikzpicture}[main_node/.style={circle,draw,minimum size=0.8em, inner sep=1.2pt]}] 
\node[main_node] (1) at (0, 0) {};
\node[main_node] (2) at (0, 1) {};
\node[main_node] (3) at (0, 2) {};
\node[main_node] (4) at (-0.8, 3) {};
\node[main_node] (5) at (0.8, 3.2) {};
\node[main_node] (6) at (-3.5, 4) {};
\node[main_node, fill=red!20] (7) at (-2.5, 4.2) {};
\node[main_node, fill=red!20] (8) at (-1.5, 4.2) {};
\node[main_node, fill=orange] (9) at (-0.5, 4.4) {};
\node[main_node, fill=orange] (10) at (0.5, 4.4) {};
\node[main_node] (11) at (1.5, 4.6) {};
\node[main_node, fill=green!40!blue!20] (12) at (2.5, 5.0) {};
\node[main_node, fill=red!40!yellow!40] (13) at (3.5, 5.4) {};
\node[main_node, fill=green!40!blue!20] (14) at (-5,5.0){};
\node[main_node, fill=red!40!yellow!40] (15) at (-4, 5.4) {};
\node[main_node, fill=red!40!yellow!40] (16) at (-3, 5.4) {};
\node[main_node, fill=blue!40!orange!40] (17) at (-2, 6.0) {};
\node[main_node, fill=blue!40!orange!40] (18) at (-1, 6.0) {};
\node[main_node] (19) at (0, 6.4){};
\node[main_node, fill=green!40!yellow!40] (20) at (1, 6.6) {};
\node[main_node, fill=green!40!yellow!40] (21) at (2, 6.6) {};
\node[main_node, fill=yellow!40!yellow!40] (22) at (3, 7.0) {};
\node[main_node, fill=yellow!40!yellow!40] (23) at (4, 7.0) {};
\node[main_node] (24) at (5.2, 7.4) {};
 \path[draw, thick]
(1) edge node {} (2)
(2) edge node {} (3)
(3) edge node {} (4)
(3) edge node {} (5)
(4) edge node {} (6)
(4) edge node {} (7)
(4) edge node {} (8)
(4) edge node {} (9)
(4) edge node {} (10)
(4) edge node {} (11)
(5) edge node {} (8)
(5) edge node {} (9)
(5) edge node {} (11)
(5) edge node {} (12)
(5) edge node {} (13)
(6) edge node {} (14)
(6) edge node {} (16)
(6) edge node {} (17)
(6) edge node {} (18)
(6) edge node {} (19)
(6) edge node {} (20)
(6) edge node {} (21)
(7) edge node {} (18)
(8) edge node {} (17)
(8) edge node {} (18)
(8) edge node {} (21)
(8) edge node {} (22)
(9) edge node {} (15)
(9) edge node {} (17)
(9) edge node {} (18)
(9) edge node {} (19)
(9) edge node {} (21)
(9) edge node {} (22)
(9) edge node {} (23)
(9) edge node {} (24)
(10) edge node {} (16)
(10) edge node {} (18)
(10) edge node {} (21)
(10) edge node {} (23)
(11) edge node {} (17)
(11) edge node {} (20)
(11) edge node {} (21)
(11) edge node {} (22)
(11) edge node {} (23)
(11) edge node {} (24)
(12) edge node {} (22)
(12) edge node {} (24)
(13) edge node {} (24);  
\draw(0,0) node {\tiny{1}};
\draw(0,1) node {\tiny{2}};
\draw(0,2) node {\tiny{3}};
\draw(-0.8,3) node {\tiny{4}};
\draw(0.8,3.2) node {\tiny{5}};
\draw(-3.5,4) node {\tiny{6}};
\draw(-2.5,4.2) node {\tiny{7}};
 \draw(-1.5,4.2) node {\tiny{8}};
 \draw(-0.5,4.4) node {\tiny{9}};
\draw(0.5,4.4) node {\tiny{10}};
 \draw(1.5,4.6) node {\tiny{11}};
 \draw(2.5,5.0) node {\tiny{12}};
\draw(3.5,5.4) node {\tiny{13}};
 \draw(-5,5.0) node {\tiny{14}};
\draw(-4,5.4) node {\tiny{15}};
\draw(-3,5.4) node {\tiny{16}};
\draw(-2,6.0) node {\tiny{17}};
\draw(-1,6.0) node {\tiny{18}};
\draw(0,6.4) node {\tiny{19}};
\draw(1,6.6) node {\tiny{20}};
\draw(2,6.6) node {\tiny{21}};
\draw(3,7.0) node {\tiny{22}};
\draw(4,7.0) node {\tiny{23}};
\draw(5.2,7.4) node {\tiny{24}};
\draw(0.6,0) node {\tiny $\mathcal A(4)$};
\draw(0.6,1) node {\tiny $\mathcal A(4)^*$};
\draw(1.2,3.2) node {\tiny $\mathcal P_{5}$};
\draw(-1.9,4.2) node {\tiny $\mathcal P_{8}$};
\end{tikzpicture}
\caption{Initial part of the growth graph of polyhedra from $\mathcal A(4)$.}
\label{fig3}
\end{figure}
Note that, within approximate computations, the following volume equalities hold:
\begin{equation}
\begin{gathered}
\operatorname{vol} (\mathcal P_7) = \operatorname{vol} (\mathcal P_8) = 10.991587; \quad 
\operatorname{vol} (\mathcal P_9) = \operatorname{vol} (\mathcal P_{10}) = 11.136296; \cr 
\operatorname{vol} (\mathcal P_{12}) = \operatorname{vol} (\mathcal P_{14}) = 11.801747; \quad 
\operatorname{vol} (\mathcal P_{13}) = \operatorname{vol} (\mathcal P_{15}) =  \operatorname{vol} (\mathcal P_{16}) = 12.046092; \cr 
\operatorname{vol} (\mathcal P_{17}) = \operatorname{vol} (\mathcal P_{18}) = 12.276278; \quad 
\operatorname{vol} (\mathcal P_{20}) = \operatorname{vol} (\mathcal P_{21}) = 12.611908; \cr 
\operatorname{vol} (\mathcal P_{22}) = \operatorname{vol} (\mathcal P_{23}) = 12.883862. 
\end{gathered}
\end{equation} 

\begin{remark} \label{remark2.1} 
{\rm 
Under an edge-twist operation, the normalized volume can both increase and decrease. An example of an increase in the normalized volume is given by the pair $\mathcal A(4)$ and $\mathcal A(4)^*$. The inequality $\omega(\mathcal A(4)^*) > \omega(\mathcal A(4))$ holds, since $\omega(\mathcal A(4)) = \frac{6.023046}{8} = 0.75288$, while $\omega(\mathcal A(4)^*) = \frac{7.327725}{9} = 0.814191$. Since the polyhedron $\mathcal A(4)^*$ can be assembled from two copies of $\mathcal A(3)$, we have $\vol (\mathcal A(4)^*) = 2 \vol (\mathcal A(3))$. The volumes of the polyhedra $\mathcal A(3)$ and $\mathcal A(4)$ are computed by formula~(\ref{eqn2}). An example of a decrease in the normalized volume is given by the pair of polyhedra marked in Fig.~\ref{fig3} as $\mathcal P_{5}$ and $\mathcal P_8 = \mathcal P^*_5$. The Schlegel diagrams of these polyhedra are shown in Fig.~\ref{fig4}. The inequality $\omega(\mathcal P^*_5) < \omega(\mathcal P_5)$ holds, since $\omega(\mathcal P_5) = \tfrac{10.149416}{11} = 0.922674$, while $\omega(\mathcal P_5^*) = \omega(\mathcal P_8) = \frac{10.991587}{12} = 0.915965$. The volumes of the polyhedra $\mathcal P_5$ and $\mathcal P_8$ were computed in~\cite{VeEg}.
} 
\end{remark}
\begin{figure}[htb]
    \centering
    \setlength{\unitlength}{1.0mm}
    \begin{tikzpicture}[scale=1.2] 
\coordinate (A) at (-1.5, 1.5) ;
  \coordinate (B) at (0, 3) ;
  \coordinate (C) at (1.5, 1.5) ;
  \coordinate (D) at (0, 0) ;
  \fill[black] (A) circle (2pt) ;
  \fill[black] (B) circle (2pt) ; 
  \fill[black] (C) circle (2pt) ;
  \fill[black] (D) circle (2pt) ;
  \coordinate (E) at (-0.5, 1.) ;
  \coordinate (F) at (-0.5, 2.) ;
  \coordinate (G) at (0.5, 2.) ;
  \coordinate (H) at (0.5, 1.) ;
  \coordinate (K) at (0, 1.1) ;
  \coordinate (L) at (-0.3,1.3) ;
  \coordinate (M) at (0.3,1.3) ;
  \fill[black] (E) circle (2pt) ;
  \fill[black] (F) circle (2pt) ; 
  \fill[black] (G) circle (2pt) ;
  \fill[black] (H) circle (2pt) ;
  \fill[black] (K) circle (2pt) ; 
  \fill[black] (L) circle (2pt) ;
 \fill[black] (M) circle (2pt) ;
\draw[very thick, black] (A) -- (B) -- (C) -- (D) -- cycle;
\draw[very thick, black] (A) -- (F) -- (B) -- (G) -- (C) -- (H) -- (D) -- (E) -- cycle;
\draw[very thick, black] (E) -- (K) -- (L) -- cycle;
\draw[very thick, black] (H) -- (K) -- (M) -- cycle;
\draw[very thick, black] (L) -- (F);
\draw[very thick, black] (M) -- (G);
\draw[very thick, black] (F) -- (G);
\draw[very thick, black] (L) -- (M);
  \draw[-stealth,thick] (2.5,1.5) -- (3.5,1.5);
\coordinate (A2) at (4.5, 1.5) ;
  \coordinate (B2) at (6, 3) ;
  \coordinate (C2) at (7.5, 1.5) ;
  \coordinate (D2) at (6, 0) ;
  \fill[black] (A2) circle (2pt) ;
  \fill[black] (B2) circle (2pt) ; 
  \fill[black] (C2) circle (2pt) ;
  \fill[black] (D2) circle (2pt) ;
  \coordinate (E2) at (5.5, 1.) ;
  \coordinate (F2) at (5.5, 2.) ;
  \coordinate (G2) at (6.5, 2.) ;
  \coordinate (H2) at (6.5, 1.) ;
  \coordinate (K2) at (6, 1.1) ;
  \coordinate (L2) at (5.8,1.3) ;
  \coordinate (M2) at (6.2,1.3) ;
   \coordinate (O2) at (6,1.7) ;
  \fill[red] (O2) circle (2pt) ;
  \draw[very thick, red] (L2) -- (O2) -- (M2);
  \draw[very thick, red] (F2) -- (O2) -- (G2);
    \fill[black] (E2) circle (2pt) ;
  \fill[black] (F2) circle (2pt) ; 
  \fill[black] (G2) circle (2pt) ;
  \fill[black] (H2) circle (2pt) ;
  \fill[black] (K2) circle (2pt) ; 
  \fill[black] (L2) circle (2pt) ;
 \fill[black] (M2) circle (2pt) ;
 \draw[very thick, black] (L2) -- (M2);
\draw[very thick, black] (A2) -- (B2) -- (C2) -- (D2) -- cycle;
\draw[very thick, black] (A2) -- (F2) -- (B2) -- (G2) -- (C2) -- (H2) -- (D2) -- (E2) -- cycle;
\draw[very thick, black] (E2) -- (K2) -- (L2) -- cycle;
\draw[very thick, black] (H2) -- (K2) -- (M2) -- cycle;
\draw[very thick, black] (F2) -- (G2);
    \end{tikzpicture}
    \caption{The polyhedra $\mathcal P_5$ and $\mathcal P_8 = \mathcal P_5^*$.}
    \label{fig4}   
\end{figure}

Upper and lower estimates for volumes of ideal right-angled polyhedra in terms of the number of vertices were obtained by Atkinson in~\cite{At09}.

\begin{theorem} \cite[Theorem 2.2]{At09}  \label{theorem2.3}
If an ideal right-angled hyperbolic polyhedron $\mathcal P$ has $\ver(\mathcal P)$ vertices, then 
\begin{equation} \label{eqn3}
\frac{\voct}{4}(\ver(\mathcal P) - 2) \leq \vol(\mathcal P) \leq \frac{\voct}{2}(\ver(\mathcal P) - 4).
\end{equation}
Both inequalities become equalities for the regular ideal hyperbolic octahedron---the unique ideal right-angled polyhedron with six vertices.
Moreover, the upper estimate is asymptotically sharp: there exists a sequence of ideal right-angled polyhedra $\{Q_i\}$ with $\ver(Q_i) \to \infty$ such that 
$
\lim_{i \to \infty} \frac{\vol(Q_i)}{\ver(Q_i)} = \frac{1}{2} \voct.
$
\end{theorem}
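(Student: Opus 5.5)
Since this is Atkinson's theorem, the plan is to reconstruct his two-sided argument, treating the two inequalities separately and reducing both to counts of face sizes. From Euler's formula and $4$-valency we have $E=2V$, $F=V+2$, and hence
\[
\sum_{\text{faces}}(k-2)=2E-2F=2V-4 .
\]
Both bounds will be expressed through this sum or through~\eqref{eqn1}.

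\textbf{Upper bound.} Place an ideal vertex $v$ of $\mathcal P$ at $\infty$ in the upper half-space model. The four faces through $v$ become vertical planes bounding a rectangle, since the right angles at $v$ make the link of $v$ a Euclidean rectangle. The remaining faces lie on hemispheres meeting each other and the vertical planes orthogonally. Then $\mathcal P$ is the union of the cones from $\infty$ over the faces not containing $v$. Each such face is an ideal $k$-gon, and I would cut its cone into $k$ tetrahedra using the vertical line through the Euclidean centre of its hemisphere. Each piece is a tetrahedron with one ideal vertex at $\infty$ and dihedral angles determined by the half-angles $\theta_i$ subtended by the edges, with $\sum\theta_i=\pi$. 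Its volume is a sum of Lobachevsky functions. The orthogonality between adjacent faces ties the angle at a shared edge in one cone to the angle in the neighbouring cone. Summing over edges, each edge of $\mathcal P$ not incident to $v$ should contribute at most $\Lambda(\pi/4)$-type terms, and each vertex should account for at most $\voct/2=4\Lambda(\pi/4)$. The edges and vertices adjacent to $v$ contribute less, which produces the correction $-4$.

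\textbf{Lower bound.} I would use the reflection orbifold of $\mathcal P$ together with a result on cusp volume or on manifolds with totally geodesic boundary. Concretely, cut off each ideal vertex by a horoball neighbourhood. Since $\mathcal P$ is right-angled, these horoball neighbourhoods can be chosen disjoint and maximal, each with rectangular cross-section. A packing-density estimate of Böröczky type, or alternatively Miyamoto's bound applied to a suitable totally geodesic face, gives a contribution of at least $\voct/4$ per vertex. A global correction of $-2$ comes from Euler characteristic bookkeeping.

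\textbf{Equality, sharpness, and the main obstacle.} For the octahedron, $V=6$ gives $\voct$ on both sides, which checks both equalities. Asymptotic sharpness of the upper bound follows from the antiprisms $\mathcal A(n)$ via formula~\eqref{eqn2}, since $\vol(\mathcal A(n))/2n\to \voct/2$ as already noted. I expect the main obstacle to be the upper bound's local estimate. One must show that the sum of Lobachevsky terms from the pieces meeting at each vertex is at most $\voct/2$. This needs concavity of $\Lambda$ on $[0,\pi/2]$ together with the right-angle constraints, and I would first try to check it by a Lagrange-multiplier argument at a single vertex.
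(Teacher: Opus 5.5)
The paper only quotes this theorem from Atkinson, so I judge your plan on its own terms. There are two genuine gaps and one false claim.

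\textbf{Upper bound and sharpness.} Your set-up is the right one: a vertex at $\infty$, cones over the non-vertical faces, and each cone split along the vertical line through the centre of its circle into pieces of volume $\Lambda(\theta_i)$. But the step you defer as the main obstacle is aimed at the wrong unit. No per-vertex Lagrange-multiplier estimate is needed, because the right-angle constraint lives on edges and is exact. Suppose non-vertical faces $F,G$ share an edge with endpoints $p,q$. Their circles are orthogonal, so the triangle $c_F\,p\,c_G$ formed by the two centres and $p$ is right-angled at $p$, and $pq\perp c_Fc_G$. Hence the half-angles subtended by $pq$ satisfy $\theta_F+\theta_G=\pi/2$. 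If instead $G$ is vertical, $c_F$ lies on the trace of $G$, so $\theta_F=\pi/2$ and that piece has volume $\Lambda(\pi/2)=0$. One also checks that each centre lies in its closed polygon, so all $\theta\in(0,\pi/2]$. Thus $\vol(\mathcal P)=\sum_e\bigl[\Lambda(\theta_e)+\Lambda(\pi/2-\theta_e)\bigr]$, summed over the edges not lying on the four vertical faces. Each term is at most $2\Lambda(\pi/4)=\voct/4$, since $\Lambda''(\theta)=-\cot\theta<0$ on $(0,\pi/2)$. There are $2V-4-\sum_{i=1}^{4}(k_i-2)\le 2V-8$ such edges, which is exactly where $\frac{\voct}{2}(V-4)$ and the $-4$ come from.

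Your sharpness argument, however, is false. By \eqref{eqn2}, $\vol(\mathcal A(n))\sim 4n\Lambda(\pi/4)=\frac{n}{2}\voct$ while $\ver(\mathcal A(n))=2n$. So $\omega(\mathcal A(n))\to\frac14\voct$, which is the slope of the \emph{lower} bound. The sentence after \eqref{eqn2} asserting the limit $\frac12\voct$ is a slip; the proof of Theorem~\ref{theorem1} uses the correct value. You need polyhedra in which almost every edge has $\theta_e=\pi/4$. For instance, put $v$ at $\infty$ over the rectangle $[0,2a]\times[0,2b]$. Take as the other vertices the lattice points with even coordinate sum, and as faces the squares $|x-p|+|y-q|\le 1$ with $p+q$ odd and $(p,q)$ in the rectangle, intersected with the rectangle. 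All circles have radius $1$, adjacent ones are orthogonal, and every interior edge has $\theta_e=\pi/4$. Only $O(a+b)$ of the roughly $4ab$ edges lie on vertical faces, so $\vol/\ver\to\frac12\voct$. For $a=b=1$ this is the octahedron.

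\textbf{Lower bound.} Here your proposal has no working mechanism. Horoball packing cannot give a bound that is sharp at the octahedron. Equal maximal cusps in the regular ideal octahedron have total volume $3$, so a B\"or\"oczky-type density estimate yields only $\vol\ge 2\sqrt3\,\vtet\approx 3.52<\voct$.

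Miyamoto's inequality $\vol(N)\ge\frac{\voct}{2}|\chi(\partial N)|$, for hyperbolic $N$ with totally geodesic boundary, is the right tool. But it must be applied to a manifold built from $\mathcal P$, not to a face.
\begin{itemize}
\item Since $\mathcal P$ is $4$-valent, its faces admit a checkerboard colouring in which every edge separates a black and a white face.
\item Double $\mathcal P$ along its white faces. This gives a finite-volume hyperbolic manifold $N$ with $\vol(N)=2\vol(\mathcal P)$. Its boundary, the doubled black faces, is totally geodesic, because the two right angles at each edge add up to $\pi$.
\item Each doubled black $k$-gon is a $k$-punctured sphere, and each of the $2V$ edges lies on exactly one black face. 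Hence $|\chi(\partial N)|=\sum_{\mathrm{black}}(k_F-2)=2V-2B$, where $B$ is the number of black faces.
\item Choose the colour class with $B\le F/2=(V+2)/2$. Miyamoto then gives $\vol(\mathcal P)\ge\frac{\voct}{4}(V-2)$.
\end{itemize}
So the $-2$ is nothing more mysterious than $F=V+2$.
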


Note that, by Theorem~\ref{theorem2.1}, inequality~\eqref{eqn3} assumes that $\ver(\mathcal P) \geq 6$. Subsequently, the upper estimate in~(\ref{eqn3}) was improved in~\cite{EV20-2} in the case $\ver(\mathcal P) \geq 8$ and in~\cite{ABEV} in the case $\ver(\mathcal P) \geq 24$.
	
Consider the set $\mathcal R^n_{ideal} = \{ \mathcal P \in \mathcal R_{ideal} \, \mid \ver(\mathcal P) = n\}$, i.e., the set of ideal right-angled hyperbolic polyhedra having $n$ vertices. The volumes of polyhedra belonging to the sets $\mathcal R^n_{ideal}$, where $6 \leq n \leq 21$, were computed in~\cite{VeEg}.

In Table~\ref{table1}, for $6 \leq n \leq 21$, we give the following quantities: the number of elements $| \mathcal R^n_{ideal}| $ in the set $\mathcal R^n_{ideal}$; the number of distinct normalized volumes $|\Omega(\mathcal R^n_{ideal})|$; and the minimum $\min \Omega(\mathcal R^n_{ideal})$ and maximum $\max \Omega(\mathcal R^n_{ideal})$ values of normalized volumes.

\begin{table}[ht] \caption{Properties of normalized volumes in the ideal case.} \label{table1} 
\begin{center} 
\begin{tabular}{|r|r|r|r|r|} \hline
$n$ & $| \mathcal R^n_{ideal}| $ & $ |\Omega(\mathcal R^n_{ideal})|$ & $\min \Omega(\mathcal R^n_{ideal})$ & $\max \Omega(\mathcal R^n_{ideal})$  \\ \hline \hline 
6 & 1 & 1 & 0.610643  & 0.610643\\ \hline 
7 & 0 & 0 & - & -\\ \hline
8 & 1 & 1 & 0.752880 & 0.752880 \\ \hline
9 & 1 & 1 & 0.814191 & 0.814191 \\ \hline
10 & 2 & 2 & 0.813788  & 0.861241  \\ \hline
11 & 2 & 2 & 0.880628 & 0.922674   \\ \hline
12 & 9 & 7 &  0.845784 & 1.003841  \\ \hline
13 & 11 & 7 &  0.907826 & 1.026982  \\ \hline
14 & 37 & 17 & 0.864735 & 1.059477   \\ \hline
15 & 79 & 31 & 0.920885 & 1.088771   \\ \hline
16 & 249 & 79 & 0.876903 & 1.129321   \\ \hline
17 & 671 & 172 & 0.927656  & 1.148432  \\ \hline
18 & 2182 & 495 & 0.885188  & 1.180085  \\ \hline
19 & 6692 & 1359 & 0.931280 & 1.204969   \\ \hline
20 & 22131 & 4276 & 0.891085 & 1.229961   \\ \hline
21 & 72405 & 13031 & 0.933202 & 1.248958  \\ \hline
\end{tabular}
\end{center}
\end{table}

\section{Compact right-angled polyhedra} \label{prelim-compact}

 A \textit{compact right-angled polyhedron} is a right-angled hyperbolic polyhedron all of whose vertices are finite. By Theorem~\ref{theorem2.1}, all vertices of a compact right-angled polyhedron are 3-va\-lent. Note (see, for instance,~\cite[Th.~3.2]{In08}) that every face of a compact right-angled polyhedron is a right-angled polygon and, consequently, has at least $5$ sides.

For a polyhedron $\mathcal P \in \mathcal R_{comp}$, denote by $F$ the number of its faces, by $V$ the number of its vertices, and by $p_k$ the number of its $k$-gonal faces, $k \geq 5$. Then $F = \sum_{k\geq 5} p_k$. Euler's formula and the 3-valency of the vertices of the polyhedron imply that $2F = V + 4$, whence
\begin{equation} \label{eqn4}
p_5 = 12 + \sum_{k \geq 7} (k-6) p_k. 
\end{equation} 
In particular, the compact right-angled hyperbolic polyhedron with the smallest number of faces, and therefore with the smallest number of vertices, is the dodecahedron.
	
Since the conditions for realizing an abstract right-angled polyhedron as a compact right-angled polyhedron in $\mathbb{H}^3$ were first studied by Pogorelov~\cite{Pog}, these polyhedra are also called \textit{Pogorelov polyhedra}. Information on the life and scientific research of Aleksei Vasil'evich Pogorelov is presented in the article~\cite{BVI}, dedicated to the 100th anniversary of his birth. Note that fullerene polyhedra are Pogorelov polyhedra. The correlation of volumes of right-angled hyperbolic fullerenes with chemical properties of fullerenes is discussed in~\cite{EV20-1}.
		
In the following example, we describe an infinite family of compact right-angled polyhedra generalizing the dodecahedron, which will play an important role in our considerations.
	
\begin{example} \cite{Ves87} \label{ex:lobell} 
{\rm 
For $n \geq 5$, a \textit{L\"obell polyhedron} $L(n)$ is a $(2n+2)$-hedron with upper and lower $n$-gonal bases and with lateral surface formed by two layers of $n$ pentagons each, in which three edges meet at every vertex. The Schlegel diagrams of the polyhedra $L(5)$ and $L(6)$ are shown in Fig.~\ref{fig5}. In particular, $L(5)$ is the dodecahedron. Realizability of right-angled L\"obell polyhedra as hyperbolic polyhedra follows from Theorem~\ref{theorem2.1}. Right-angled hyperbolic L\"obell polyhedra will be denoted by $\mathcal L(n)$.
}	
\end{example}

\begin{figure}[ht]	
\begin{center} 
\begin{tikzpicture}[scale=0.8, rotate = 180] 
\coordinate (O) at (0,0);
\foreach \angle [count=\i] in {234,306,...,594} {
\coordinate (A\i) at (\angle:0.6);}
\draw[very thick, black] (A1) -- (A2) -- (A3) -- (A4) -- (A5) -- cycle; 
\foreach \angle [count=\i] in {234,306,...,594} {
\coordinate (B\i) at (\angle:1.);}
\draw[very thick, black] (A1) -- (B1);
\draw[very thick, black] (A2) -- (B2);
\draw[very thick, black] (A3) -- (B3);
\draw[very thick, black] (A4) -- (B4);
\draw[very thick, black] (A5) -- (B5);
\foreach \angle [count=\i] in {54,126,...,342} {
\coordinate (C\i) at (\angle:1.4);}
\draw[very thick, black] (B1) -- (C3);
\draw[very thick, black] (B1) -- (C4);
\draw[very thick, black] (B2) -- (C4);
\draw[very thick, black] (B2) -- (C5);
\draw[very thick, black] (B3) -- (C5);
\draw[very thick, black] (B3) -- (C1);
\draw[very thick, black] (B4) -- (C1);
\draw[very thick, black] (B4) -- (C2);
\draw[very thick, black] (B5) -- (C2);
\draw[very thick, black] (B5) -- (C3);
\foreach \angle [count=\i] in {54,126,...,342} {
\coordinate (D\i) at (\angle:2.);}
\draw[very thick, black] (D1) -- (D2) -- (D3) -- (D4) -- (D5) -- cycle;
\draw[very thick, black] (C1) -- (D1);
\draw[very thick, black] (C2) -- (D2);
\draw[very thick, black] (C3) -- (D3);
\draw[very thick, black] (C4) -- (D4);
\draw[very thick, black] (C5) -- (D5);	
\foreach \v in {A1,A2,A3,A4,A5,B1,B2,B3,B4,B5,C1,C2,C3,C4,C5, D1, D2, D3, D4, D5}
{\fill[black] (\v) circle (2.4pt);}
\end{tikzpicture}
\qquad 
\begin{tikzpicture}[scale=0.8, rotate = 180] 
\coordinate (O) at (0,0);
\foreach \angle [count=\i] in {30,90,...,330} {
\coordinate (A\i) at (\angle:0.6);}
\draw[very thick, black] (A1) -- (A2) -- (A3) -- (A4) -- (A5) -- (A6) -- cycle; 
\foreach \angle [count=\i] in {30,90,...,330} {
\coordinate (B\i) at (\angle:1.);}
\draw[very thick, black] (A1) -- (B1);
\draw[very thick, black] (A2) -- (B2);
\draw[very thick, black] (A3) -- (B3);
\draw[very thick, black] (A4) -- (B4);
\draw[very thick, black] (A5) -- (B5);
\draw[very thick, black] (A6) -- (B6);
\foreach \angle [count=\i] in {0,60,...,360} {
\coordinate (C\i) at (\angle:1.4);}
\draw[very thick, black] (B1) -- (C1);
\draw[very thick, black] (B1) -- (C2);
\draw[very thick, black] (B2) -- (C2);
\draw[very thick, black] (B2) -- (C3);
\draw[very thick, black] (B3) -- (C3);
\draw[very thick, black] (B3) -- (C4);
\draw[very thick, black] (B4) -- (C4);
\draw[very thick, black] (B4) -- (C5);
\draw[very thick, black] (B5) -- (C5);
\draw[very thick, black] (B5) -- (C6);
\draw[very thick, black] (B6) -- (C6);
\draw[very thick, black] (B6) -- (C1);
\foreach \angle [count=\i] in {0,60,...,360} {
\coordinate (D\i) at (\angle:2.);}
\draw[very thick, black] (D1) -- (D2) -- (D3) -- (D4) -- (D5) -- (D6)-- cycle;
\draw[very thick, black] (C1) -- (D1);
\draw[very thick, black] (C2) -- (D2);
\draw[very thick, black] (C3) -- (D3);
\draw[very thick, black] (C4) -- (D4);
\draw[very thick, black] (C5) -- (D5); 
\draw[very thick, black] (C6) -- (D6);	
\foreach \v in {A1,A2,A3,A4,A5,A6,B1,B2,B3,B4,B5,B6,C1,C2,C3,C4,C5, C6, D1, D2, D3, D4, D5, D6}{
\fill[black] (\v) circle (2.4pt);}
\end{tikzpicture}
\qquad 
\begin{tikzpicture}[scale=0.8, rotate = 180] 
\coordinate (O) at (0,0);
\foreach \angle [count=\i] in {30,90,...,330} {
\coordinate (A\i) at (\angle:0.6);}
\draw[very thick, black] (A1) -- (A2) -- (A3) -- (A4) -- (A5) -- (A6) -- cycle; 
\foreach \angle [count=\i] in {30,90,...,330} {
\coordinate (B\i) at (\angle:1.);}
\draw[very thick, black] (A1) -- (B1);
\draw[very thick, black] (A2) -- (B2);
\draw[very thick, black] (A3) -- (B3);
\draw[very thick, black] (A4) -- (B4);
\draw[very thick, black] (A5) -- (B5);
\draw[very thick, black] (A6) -- (B6);
\foreach \angle [count=\i] in {0,60,...,360} {
\coordinate (C\i) at (\angle:1.4);}
\draw[very thick, black] (B1) -- (C1);
\draw[very thick, black] (B1) -- (C2);
\draw[very thick, black] (B2) -- (C2);
\draw[very thick, black] (B2) -- (C3);
\draw[very thick, black] (B3) -- (C3);
\draw[very thick, black] (B3) -- (C4);
\draw[very thick, black] (B4) -- (C4);
\draw[very thick, black] (B4) -- (C5);
\draw[very thick, black] (B5) -- (C5);
\draw[very thick, black] (B5) -- (C6);
\draw[very thick, black] (B6) -- (C6);
\draw[very thick, black] (B6) -- (C1);
\foreach \angle [count=\i] in {0,60,...,360} {
\coordinate (D\i) at (\angle:2.);}
\draw[very thick, black] (D1) -- (D2) -- (D3) -- (D4) -- (D5) -- (D6)-- cycle;
\draw[very thick, black] (C1) -- (D1);
\draw[very thick, black] (C2) -- (D2);
\draw[very thick, black] (C3) -- (D3);
\draw[very thick, black] (C4) -- (D4);
\draw[very thick, black] (C5) -- (D5); 
\draw[very thick, black] (C6) -- (D6);	
\foreach \v in {A1,A2,A3,A4,A5,A6,B1,B2,B3,B4,B5,B6,C1,C2,C3,C4,C5, C6, D1, D2, D3, D4, D5, D6}{
\fill[black] (\v) circle (2.4pt);}
\coordinate (M) at (-0.5,0);
\coordinate (N) at (0.5,0);
\draw[very thick, red] (M) -- (N);	
\fill[red] (M) circle (2.4pt);
\fill[red] (N) circle (2.4pt);
\end{tikzpicture}
\end{center}
\caption{The dodecahedron $L(5)$, the polyhedron $L(6)$, and the polyhedron $L(6)^+$.} 
\label{fig5} 	
\end{figure}

As shown in~\cite{Ves98}, the volume of the polyhedron $\mathcal L(n)$, $n \geq 5$, is expressed by the following formula:
\begin{equation} \label{eqn5}
\vol(\mathcal L(n)) = \frac{n}{2} \left[ 2 \Lambda(\theta_n) + \Lambda\left( \theta_n + \frac{\pi}{n} \right) + \Lambda \left( \theta_n - \frac{\pi}{n} \right) - \Lambda\left( 2 \theta_n - \frac{\pi}{2} \right) \right], 
\end{equation} 
where $\theta_n = \frac{\pi}{2} - \arccos \left( \frac{1}{2\cos(\pi/n)} \right)$. In particular, $\vol(\mathcal L(5)) = 4.306210$ and $\vol(\mathcal L(6)) = 6.023046$. Geometric and arithmetic properties of the polyhedra $\mathcal L(n)$ are discussed in~\cite{BoDo}. For geometric properties of nearly right-angled L\"obell polyhedra, see~\cite{BuMeVe}.

Following~\cite{In08}, we define two operations on the set of abstract polyhedra: composition and edge surgery. Let $P_1$ and $P_2$ be two abstract polyhedra. Let $F_1$ be a face in $P_1$ having the same number of sides as some face $F_2$ in $P_2$. Define a new abstract polyhedron $P = P_1 \# P_2$ as the result of the combinatorial gluing of the polyhedra $P_1$ and $P_2$ along the faces $F_1$ and $F_2$. The polyhedron $P$ will be called the \textit{connected sum}, or \textit{composition}, of the polyhedra $P_1$ and $P_2$. As shown in~\cite[Th.~4.4]{In08}, if the polyhedra $P_1$ and $P_2$ are realizable as compact right-angled hyperbolic polyhedra, then $P$ is also realizable as a compact right-angled hyperbolic polyhedron.
A \textit{decomposition} is the operation inverse to composition, i.e., cutting the polyhedron $P$ along some prismatic $k$-circuit, $k \geq 5$, into two polyhedra $P_1$ and $P_2$, each of which is realizable as a compact right-angled hyperbolic polyhedron. As shown in~\cite[Th.~6.4]{In08}, if a compact right-angled hyperbolic polyhedron $\mathcal P$ is a composition of compact right-angled hyperbolic polyhedra $\mathcal P_1$ and $\mathcal P_2$, then $\vol(\mathcal P) \geq \vol(\mathcal P_1) + \vol(\mathcal P_2)$.

Let $F_1$ and $F_2$ be two distinct faces in a compact right-angled hyperbolic polyhedron $\mathcal P$. We will say that $F_1$ and $F_2$ are \textit{joined by an edge} if they are not adjacent and there exists an edge $e$ in $\mathcal P$ joining a vertex $v_1$ on the boundary of $F_1$ to a vertex $v_2$ on the boundary of $F_2$. Since all vertices in $\mathcal P$ are 3-valent, every edge joins only one pair of faces. An edge $e$ is called \textit{good} if $e$ joins faces with at least $6$ sides, and \textit{very good} if, in addition, it is not part of a prismatic $5$-circuit. \textit{Edge surgery} along the edge $e$ is the deletion of $e$ with the corresponding change in the numbers of vertices, edges, and faces (Fig.~\ref{fig6}). As shown in~\cite[Th.~7.1, Th.~8.6]{In08}, if $\mathcal P_0$ is a compact right-angled hyperbolic polyhedron and $e \in \mathcal P_0$ is a very good edge, then the polyhedron $\mathcal P_1$ obtained by surgery along the edge $e$ is also compact right-angled hyperbolic and, moreover, $\vol(\mathcal P_0) > \vol(\mathcal P_1)$.
 \begin{figure}[ht]
    \centering
    \setlength{\unitlength}{1.0mm}
    \begin{tikzpicture}[scale=1.2] 
\coordinate (1) at (5, 0);
\coordinate (2) at (6, 0);
\coordinate (3) at (5, 1);
\coordinate (4) at (6, 1);  
\draw[line width=1.5pt, red] (1) -- (2) ;
\draw[line width=1.5pt, red] (3) -- (4) ; 
 \draw[thick](5,1) -- (4.8,1.2);
\draw[thick](5,1) -- (4.8,0.8);
\draw[thick](6,1) -- (6.2,1.2);
\draw[thick](6,1) -- (6.2,0.8);
\draw[thick](6,0) -- (6.2,0.2);
\draw[thick](6,0) -- (6.2,-0.2);
 \draw[thick](5,0) -- (4.8,-0.2);
 \draw[thick](5,0) -- (4.8,0.2);
  \fill[black] (1) circle (2pt) ;
  \fill[black] (2) circle (2pt) ; 
  \fill[black] (3) circle (2pt) ;
  \fill[black] (4) circle (2pt) ;  
  \draw[-stealth,thick] (3,0.5) -- (4, 0.5);
\coordinate (6) at (0, 0) ;
  \coordinate (7) at (2, 0) ;
  \coordinate (8) at (0, 1) ;
  \coordinate (9) at (2, 1) ;
  \draw[line width=1.5pt, red] (0,1) -- (1,0.8) -- (2,1);
  \draw[line width=1.5pt, red] (0,0) -- (1,0.2) -- (2,0);
  \draw[line width=1.5pt, red] (1,0.2) -- (1,0.8) ; 
  \fill[black] (6) circle (2pt) ;
  \fill[black] (7) circle (2pt) ; 
  \fill[black] (8) circle (2pt) ;
  \fill[black] (9) circle (2pt) ;
  \fill[red] (1, 0.2) circle (2pt) ; 
   \fill[red] (1,0.8) circle (2pt) ; 
    \draw[thick](0,1) -- (-0.2,1.2);
    \draw[thick](0,1) -- (-0.2,0.8);
    \draw[thick](2,1) -- (2.2,1.2);
    \draw[thick](2,1) -- (2.2,0.8);
    \draw[thick](2,0) -- (2.2,0.2);
    \draw[thick](2,0) -- (2.2,-0.2);
    \draw[thick](0,0) -- (-0.2,-0.2);
    \draw[thick](0,0) -- (-0.2,0.2); 
    \draw(5.5,1.5) node {$n_1-1$};
     \draw(5.5,0.5) node {$n_2 + n_3 - 4$};
      \draw(5.5,-0.5) node {$n_4 - 1$};
         \draw(5.5,-1) node {$\mathcal P_1$};
     \draw(1,1.5) node {$n_1$};
     \draw(0.4,0.5) node {$n_2$};
      \draw(1.6,0.5) node {$n_3$};
     \draw(1,-0.5) node {$n_4$};
      \draw(1,-1.) node {$\mathcal P_0$};
     \draw[red](1.2,0.5) node {$e$};
    \end{tikzpicture}
    \caption{Edge surgery along the edge $e$.}
    \label{fig6}   
\end{figure}
The inverse operation will be called \textit{adding the edge $e$}, and the resulting polyhedron will be denoted by $\mathcal P^+ = \mathcal P \cup \{e\}$. In Fig.~\ref{fig5}, we show the Schlegel diagram of the polyhedron $L(6)^+$ obtained from $L(6)$ by adding an edge.

The important role of L\"obell polyhedra for describing the structure of the set of compact right-angled hyperbolic polyhedra is due to the following theorem.

\begin{theorem} \cite[Th.~9.1]{In08} \label{theorem3.1}
Let $\mathcal P_0$ be a compact right-angled hyperbolic polyhedron. Then there exists a sequence $\mathcal P_1, \mathcal P_2, \ldots, \mathcal P_k$ of pairwise disjoint unions of compact right-angled hyperbolic polyhedra such that, for $i=1, \ldots, k$, the union $\mathcal P_i$ is obtained from $\mathcal P_{i-1}$ by a decomposition or by edge surgery, while $\mathcal P_k$ consists of L\"obell polyhedra. Moreover,
$$
\vol(\mathcal P_0) \geq \vol(\mathcal P_1) \geq \vol(\mathcal P_2) \geq \dots \geq \vol(\mathcal P_k). 
$$
\end{theorem}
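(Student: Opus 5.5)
The plan is to argue by induction on the number of faces $F(\mathcal P_0)$, and to reduce everything to one combinatorial dichotomy. \emph{Claim:} if a compact right-angled polyhedron $\mathcal P$ is not a L\"obell polyhedron, then either (a) $\mathcal P$ contains a prismatic $k$-circuit, $k\ge 5$, along which it decomposes into two compact right-angled polyhedra, or (b) $\mathcal P$ contains a very good edge.

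Given the Claim, the sequence is built greedily. If every component of the current union $\mathcal P_{i-1}$ is a L\"obell polyhedron, we stop. Otherwise we choose a non-L\"obell component and apply either decomposition or edge surgery to it, leaving the other components unchanged.
\begin{itemize}
\item In case (a), both pieces of the decomposition have strictly fewer faces than the component. By \cite[Th.~6.4]{In08}, its volume is at least the sum of the volumes of the pieces.
\item In case (b), edge surgery removes one face (Fig.~\ref{fig6}). By \cite[Th.~7.1, Th.~8.6]{In08}, the result is again compact right-angled and has strictly smaller volume.
\end{itemize}
Volume of a disjoint union is additive, so $\vol(\mathcal P_{i-1})\ge \vol(\mathcal P_i)$ at each step. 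For termination, we track the multiset of face numbers of the components. Each step replaces one entry by one or two strictly smaller entries, so the multiset decreases in the multiset ordering, which is well-founded. Every face number is at least $12$, the dodecahedron being the smallest compact right-angled polyhedron. Hence the process stops after finitely many steps, with $\mathcal P_k$ consisting of L\"obell polyhedra.

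The main obstacle is the Claim itself. I would prove it as follows.
\begin{enumerate}
\item Suppose all faces of $\mathcal P$ are pentagons. Then \eqref{eqn4} gives $F=12$ and $\mathcal P$ is the dodecahedron $\mathcal L(5)$.
\item Otherwise, pick a face $F_0$ with the maximal number of sides $n\ge 6$. Look at the first ring of $n$ faces adjacent to $F_0$ and the second ring of faces adjacent to the first ring. In $\mathcal L(n)$, both rings consist of $n$ pentagons and are closed off by a second $n$-gon.
\item Suppose some face in the first ring has at least $6$ sides. Using $3$-valency, I would locate two non-adjacent faces with at least $6$ sides joined by an edge, i.e.\ a good edge. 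A natural candidate is $F_0$ together with a suitable second-ring face, or two first-ring faces separated by a single pentagon.
\item Suppose instead both rings are pentagonal. Then either the faces beyond the second ring form a single $n$-gon, giving $\mathcal L(n)$, or some face there has at least $6$ sides, and again a good edge appears between that face and a large face of the configuration.
\item Finally, if a good edge $e$ found this way is not very good, then by definition it lies on a prismatic $5$-circuit. I would then cut along that circuit, and Inoue's realizability criterion should show that both sides are realizable, giving case (a). Proving this last point is the heart of the argument.
\end{enumerate}

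The delicate points are, first, checking Andreev's conditions (Theorem~\ref{theorem2.1}, items (3),(4)) for the two pieces of the decomposition, and second, the case analysis in steps 3–4, which shows that a non-L\"obell local structure always produces a good edge. Both require careful use of the absence of prismatic $3$- and $4$-circuits.
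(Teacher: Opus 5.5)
\paragraph{The outer reduction is fine, and matches the source.} The paper does not prove this theorem; it quotes it from \cite[Th.~9.1]{In08}. Your outer reduction works: treat one non-L\"obell component at a time, control volume by \cite[Th.~6.4, 7.1, 8.6]{In08}, and terminate because admissible decompositions and surgeries lower face counts. This agrees with the framework of the cited source.

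\paragraph{Step~3 is false.} All the content is in your Claim, and the argument you sketch for it breaks at step~3. Take the tower $\mathcal L_2(5)=\mathcal L(5)\#\mathcal L(5)$ of Example~\ref{ex:tower}. It has 12 pentagons and a belt of five hexagons $K_1,\dots,K_5$ along the gluing circuit. It has 30 vertices, so it is not L\"obell.
\begin{itemize}
\item Here $F_0$ is a hexagon with two hexagonal neighbours, so step~3 applies.
\item Yet there is no good edge at all. An edge joining $K_i$ to $K_{i+2}$ would be the intersection of two distinct faces, each adjacent to both $K_i$ and $K_{i+2}$, and $K_{i+1}$ is the only such face.
\end{itemize}
So alternative (b) fails. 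Your only route to (a) is step~5, which starts from a good edge, so it is never triggered. The required move is decomposition along the hexagon belt, a prismatic $5$-circuit your case analysis never looks for. Prismatic $5$-circuits must be detected directly, as a separate branch. When the only prismatic $5$-circuits are belts of pentagonal faces, a good edge is automatically very good, since such a belt cannot cross an edge joining two faces with at least six sides.

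\paragraph{Step~5 is not automatic either.} Cutting along a prismatic $5$-circuit $(G_1,\dots,G_5)$ gives realizable pieces only if every $G_i$ keeps at least three vertices on each side.
\begin{itemize}
\item Suppose a face $X$ on one side contains that side's endpoints of both crossing edges at $G_i$. Then $G_i$ becomes a quadrilateral in that piece. Equivalently, the cap, $G_{i-1}$, $X$, $G_{i+1}$ form a prismatic $4$-circuit.
\item Nothing in your argument excludes this for a circuit through a good edge; $X$ may be the large face at an endpoint of that edge.
\item Pushing the circuit across $G_i$ does not help, because then $G_i$ wraps $X$ from the other side.
\end{itemize}
This is the point you defer with ``should show'', so the Claim, and with it the theorem, remains unproved.

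\paragraph{Minor points.} Steps~3--4 skip the case where the first ring is all pentagons but the second ring is not. That case is easy, since $F_0$ is then joined by an edge to every second-ring face. Also, if both rings are pentagonal the polyhedron is forced to be $\mathcal L(n)$, so the second alternative in step~4 cannot occur.
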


In Fig.~\ref{fig7}, we show an initial subgraph of the graph from~\cite{In22}, which illustrates four generations of polyhedra obtained from $\mathcal L(6)$ by edge-addition operations. The numbers assigned to the vertices correspond to the ordinal numbers of the polyhedra in the list of the first $825$ polyhedra from~\cite{In22}. Two vertices are joined by an edge if the polyhedra corresponding to these vertices are related by one edge-addition transformation.
\begin{figure}[h]
	\centering
\begin{tikzpicture}[main_node/.style={circle,draw,minimum size=0.8em, inner sep=1.2pt]}] 
\node[main_node] (2) at (0, 0) {};
\node[main_node] (3) at (0, 1) {};
\node[main_node] (5) at (-0.4, 2) {};
\node[main_node] (6) at (0.4, 2.2) {};
\node[main_node] (8) at (-1.5, 3) {};
\node[main_node] (9) at (0, 3.2) {};
\node[main_node] (10) at (1.5, 3.4) {};
\node[main_node] (13) at (-4.5, 4.) {};
\node[main_node] (14) at (-3.5, 4.1) {};
\node[main_node] (15) at (-2.5, 4.2) {};
\node[main_node] (16) at (-1.5, 4.3) {};
\node[main_node] (17) at (-0.5, 4.4) {};
\node[main_node] (19) at (0.5, 4.5){};
\node[main_node] (20) at (1.5, 4.6) {};
\node[main_node] (21) at (2.5, 4.7) {};
\node[main_node] (22) at (3.5, 4.8) {};
\node[main_node] (24) at (4.5, 4.9) {};
 \path[draw, thick]
(2) edge node {} (3)
(3) edge node {} (5)
(3) edge node {} (6)
(5) edge node {} (8)
(5) edge node {} (9)
(5) edge node {} (10)
(6) edge node {} (10)
(8) edge node {} (13)
(8) edge node {} (14)
(8) edge node {} (15)
(8) edge node {} (16)
(8) edge node {} (17)
(8) edge node {} (19)
(8) edge node {} (20)
(8) edge node {} (22)
(8) edge node {} (24)
(9) edge node {} (14)
(9) edge node {} (15)
(9) edge node {} (19)
(9) edge node {} (20)
(9) edge node {} (22)
(9) edge node {} (24)
(10) edge node {} (19)
(10) edge node {} (21)
(10) edge node {} (22)
(10) edge node {} (24)
;  
\draw(0,0) node {\tiny{2}};
\draw(0,1) node {\tiny{3}};
\draw(-0.4,2) node {\tiny{5}};
\draw(0.4,2.2) node {\tiny{6}};
\draw(-1.5,3) node {\tiny{8}};
\draw(0,3.2) node {\tiny{9}};
 \draw(1.5,3.4) node {\tiny{10}};
\draw(-4.5,4) node {\tiny{13}};
\draw(-3.5,4.1) node {\tiny{14}};
\draw(-2.5,4.2) node {\tiny{15}};
\draw(-1.5,4.3) node {\tiny{16}};
\draw(-0.5,4.4) node {\tiny{17}};
\draw(0.5,4.5) node {\tiny{19}};
\draw(1.5,4.6) node {\tiny{20}};
\draw(2.5,4.7) node {\tiny{21}};
\draw(3.5,4.8) node {\tiny{22}};
\draw(4.5,4.9) node {\tiny{24}};
\draw(0.8,0) node {\tiny $\mathcal L(6)$};
\draw(0.8,1) node {\tiny $\mathcal L(6)^+$};
\end{tikzpicture}
\caption{Initial part of the growth graph of polyhedra from $\mathcal L(6)$.}
\label{fig7}
\end{figure}

\begin{remark} \label{remark3.1}
{\rm 
Under an edge-addition operation, the normalized volume can both increase and decrease. An example of an increase in the normalized volume is given by the pair $\mathcal L(6)$ and $\mathcal L(6)^+$; see Fig.~\ref{fig5}. The inequality $\omega(\mathcal L(6)^+) > \omega(\mathcal L(6))$ holds, since $\omega(\mathcal L(6)) = \frac{6.023046}{24} = 0.250960$, while $\omega(\mathcal L(6)^+) = \frac{6.967011}{26} = 0.267961$. An example of a decrease in the normalized volume is given by a pair of polyhedra $\mathcal Q$ and $\mathcal Q^+$; see Fig.~\ref{fig107}, where $\mathcal Q^+$ is obtained by adding an edge joining the edges $e_1$ and $e_2$ of the polyhedron $\mathcal Q$, which has $42$ vertices. The inequality $\omega(\mathcal Q) > \omega(\mathcal Q^+)$ holds, since $\omega(\mathcal Q) = \frac{12.713430}{42} = 0.302700$, while $\omega(\mathcal Q^+) = \frac{12.996118}{44} = 0.295366$.
} 
\end{remark}

\begin{figure}[h]
	\centering
\begin{tikzpicture}[main_node/.style={circle,draw,minimum size=0.1 em, inner sep=1.2pt]}, scale=0.6 ] 
\node[main_node, fill=black ] (0) at (3.11, 0.88) {};
\node[main_node, fill=black ] (1) at (1.47, -1.51) {};
\node[main_node, fill=black ] (2) at (2.14, 3.49) {};
\node[main_node, fill=black ] (3) at (2.36, 0.79) {};
\node[main_node, fill=black ] (4) at (0.82, -0.90) {};
\node[main_node, fill=black ] (5) at (-0.33, -2.57) {};
\node[main_node, fill=black ] (6) at (-0.29, 4.88) {};
\node[main_node, fill=black ] (7) at (1.53, 3.02) {};
\node[main_node, fill=black ] (8) at (1.77, 1.78) {};
\node[main_node, fill=black ] (9) at (1.64, 0.05) {};
\node[main_node, fill=black ] (10) at (-0.08, -0.00) {};
\node[main_node, fill=black ] (11) at (-0.66, -1.71) {};
\node[main_node, fill=black ] (12) at (-3.07, -2.63) {};
\node[main_node, fill=black ] (13) at (-3.07, 4.40) {};
\node[main_node, fill=black ] (14) at (-0.60, 3.99) {};
\node[main_node, fill=black ] (15) at (0.57, 2.63) {};
\node[main_node, fill=black ] (16) at (1.18, 1.48) {};
\node[main_node, fill=black ] (17) at (0.96, 0.47) {};
\node[main_node, fill=black] (18) at (0.12, 0.62) {};
\node[main_node, fill=black] (19) at (-0.98, -0.32) {};
\node[main_node, fill=black] (20) at (-1.74, -1.67) {};
\node[main_node, fill=black] (21) at (-2.65, -1.95) {};
\node[main_node, fill=black] (22) at (-4.88, -0.50) {};
\node[main_node, fill=black] (23) at (-4.88, 2.27) {};
\node[main_node, fill=black] (24) at (-2.63, 3.53) {};
\node[main_node, fill=black] (25) at (-1.57, 3.29) {};
\node[main_node, fill=black] (26) at (-0.20, 2.95) {};
\node[main_node, fill=black] (27) at (0.62, 1.93) {};
\node[main_node, fill=black] (28) at (0.07, 1.61) {};
\node[main_node, fill=black] (29) at (-1.83, 0.14) {};
\node[main_node, fill=black] (30) at (-1.93, -0.82) {};
\node[main_node, fill=black] (31) at (-3.20, -1.00) {};
\node[main_node, fill=black] (32) at (-4.13, -0.13) {};
\node[main_node, fill=black] (33) at (-3.93, 2.00) {};
\node[main_node, fill=black] (34) at (-2.92, 2.42) {};
\node[main_node, fill=black] (35) at (-1.44, 2.28) {};
\node[main_node, fill=black] (36) at (-0.70, 2.20) {};
\node[main_node, fill=black] (37) at (-2.10, 0.77) {};
\node[main_node, fill=black] (38) at (-2.70, -0.42) {};
\node[main_node, fill=black] (39) at (-3.81, 1.11) {};
\node[main_node, fill=black] (40) at (-2.19, 1.73) {};
\node[main_node, fill=black] (41) at (-3.04, 0.47) {};
 \path[draw, thick]
(0) edge node {} (1) 
(0) edge node {} (2) 
(0) edge node {} (3) 
(1) edge node {} (4) 
(1) edge node {} (5) 
(2) edge node {} (6) 
(2) edge node {} (7) 
(3) edge node {} (8) 
(3) edge node {} (9) 
(4) edge node {} (9) 
(4) edge node {} (10) 
(5) edge node {} (11) 
(5) edge node {} (12) 
(6) edge node {} (13) 
(6) edge node {} (14) 
(7) edge node {} (8) 
(7) edge node {} (15) 
(8) edge node {} (16) 
(9) edge node {} (17) 
(10) edge node {} (18) 
(10) edge node {} (19) 
(11) edge node {} (19) 
(11) edge node {} (20) 
(12) edge node {} (21) 
(12) edge node {} (22) 
(13) edge node {} (23) 
(13) edge node {} (24) 
(14) edge node {} (25) 
(14) edge node {} (26) 
(15) edge node {} (26) 
(15) edge node {} (27) 
(16) edge node {} (17) 
(16) edge node {} (27) 
(17) edge node {} (18) 
(18) edge node {} (28) 
(19) edge node {} (29) 
(20) edge node {} (21) 
(20) edge node {} (30) 
(21) edge node {} (31) 
(22) edge node {} (23) 
(22) edge node {} (32) 
(23) edge node {} (33) 
(24) edge node {} (25) 
(24) edge node {} (34) 
(25) edge node {} (35) 
(26) edge node {} (36) 
(27) edge node {} (28) 
(28) edge node {} (36) 
(29) edge node {} (30) 
(29) edge node {} (37) 
(30) edge node {} (38) 
(31) edge node {} (32) 
(31) edge node {} (38) 
(32) edge node {} (39) 
(33) edge node {} (34) 
(33) edge node {} (39) 
(34) edge node {} (40) 
(35) edge node {} (36) 
(35) edge node {} (40) 
(37) edge node {} (40) 
(37) edge node {} (41) 
(38) edge node {} (41) 
(39) edge node {} (41);
 \draw(-0.6,0.3) node {$e_1$};
  \draw(0.6,-2.5) node {$e_2$};
\end{tikzpicture}
\qquad 
\qquad 
\begin{tikzpicture}[main_node/.style={circle,draw,minimum size=0.1 em, inner sep=1.2pt]}, scale=0.6 ] 
\node[main_node, fill=black ] (0) at (3.11, 0.88) {};
\node[main_node, fill=black ] (1) at (1.47, -1.51) {};
\node[main_node, fill=black ] (2) at (2.14, 3.49) {};
\node[main_node, fill=black ] (3) at (2.36, 0.79) {};
\node[main_node, fill=black ] (4) at (0.82, -0.90) {};
\node[main_node, fill=black ] (5) at (-0.33, -2.57) {};
\node[main_node, fill=black ] (6) at (-0.29, 4.88) {};
\node[main_node, fill=black ] (7) at (1.53, 3.02) {};
\node[main_node, fill=black ] (8) at (1.77, 1.78) {};
\node[main_node, fill=black ] (9) at (1.64, 0.05) {};
\node[main_node, fill=black ] (10) at (-0.08, -0.00) {};
\node[main_node, fill=black ] (11) at (-0.66, -1.71) {};
\node[main_node, fill=black ] (12) at (-3.07, -2.63) {};
\node[main_node, fill=black ] (13) at (-3.07, 4.40) {};
\node[main_node, fill=black ] (14) at (-0.60, 3.99) {};
\node[main_node, fill=black ] (15) at (0.57, 2.63) {};
\node[main_node, fill=black ] (16) at (1.18, 1.48) {};
\node[main_node, fill=black ] (17) at (0.96, 0.47) {};
\node[main_node, fill=black] (18) at (0.12, 0.62) {};
\node[main_node, fill=black] (19) at (-0.98, -0.32) {};
\node[main_node, fill=black] (20) at (-1.74, -1.67) {};
\node[main_node, fill=black] (21) at (-2.65, -1.95) {};
\node[main_node, fill=black] (22) at (-4.88, -0.50) {};
\node[main_node, fill=black] (23) at (-4.88, 2.27) {};
\node[main_node, fill=black] (24) at (-2.63, 3.53) {};
\node[main_node, fill=black] (25) at (-1.57, 3.29) {};
\node[main_node, fill=black] (26) at (-0.20, 2.95) {};
\node[main_node, fill=black] (27) at (0.62, 1.93) {};
\node[main_node, fill=black] (28) at (0.07, 1.61) {};
\node[main_node, fill=black] (29) at (-1.83, 0.14) {};
\node[main_node, fill=black] (30) at (-1.93, -0.82) {};
\node[main_node, fill=black] (31) at (-3.20, -1.00) {};
\node[main_node, fill=black] (32) at (-4.13, -0.13) {};
\node[main_node, fill=black] (33) at (-3.93, 2.00) {};
\node[main_node, fill=black] (34) at (-2.92, 2.42) {};
\node[main_node, fill=black] (35) at (-1.44, 2.28) {};
\node[main_node, fill=black] (36) at (-0.70, 2.20) {};
\node[main_node, fill=black] (37) at (-2.10, 0.77) {};
\node[main_node, fill=black] (38) at (-2.70, -0.42) {};
\node[main_node, fill=black] (39) at (-3.81, 1.11) {};
\node[main_node, fill=black] (40) at (-2.19, 1.73) {};
\node[main_node, fill=black] (41) at (-3.04, 0.47) {};
\node[main_node, fill=red] (42) at (-0.40, -0.35) {};
\node[main_node, fill=red] (43) at (0.60, -2.00) {};
 \path[draw, thick]
(0) edge node {} (1) 
(0) edge node {} (2) 
(0) edge node {} (3) 
(1) edge node {} (4) 
(2) edge node {} (6) 
(2) edge node {} (7) 
(3) edge node {} (8) 
(3) edge node {} (9) 
(4) edge node {} (9) 
(4) edge node {} (10) 
(5) edge node {} (11) 
(5) edge node {} (12) 
(6) edge node {} (13) 
(6) edge node {} (14) 
(7) edge node {} (8) 
(7) edge node {} (15) 
(8) edge node {} (16) 
(9) edge node {} (17) 
(10) edge node {} (18) 
(11) edge node {} (19) 
(11) edge node {} (20) 
(12) edge node {} (21) 
(12) edge node {} (22) 
(13) edge node {} (23) 
(13) edge node {} (24) 
(14) edge node {} (25) 
(14) edge node {} (26) 
(15) edge node {} (26) 
(15) edge node {} (27) 
(16) edge node {} (17) 
(16) edge node {} (27) 
(17) edge node {} (18) 
(18) edge node {} (28) 
(19) edge node {} (29) 
(20) edge node {} (21) 
(20) edge node {} (30) 
(21) edge node {} (31) 
(22) edge node {} (23) 
(22) edge node {} (32) 
(23) edge node {} (33) 
(24) edge node {} (25) 
(24) edge node {} (34) 
(25) edge node {} (35) 
(26) edge node {} (36) 
(27) edge node {} (28) 
(28) edge node {} (36) 
(29) edge node {} (30) 
(29) edge node {} (37) 
(30) edge node {} (38) 
(31) edge node {} (32) 
(31) edge node {} (38) 
(32) edge node {} (39) 
(33) edge node {} (34) 
(33) edge node {} (39) 
(34) edge node {} (40) 
(35) edge node {} (36) 
(35) edge node {} (40) 
(37) edge node {} (40) 
(37) edge node {} (41) 
(38) edge node {} (41) 
(39) edge node {} (41);
 \path[draw, very thick, red]
(42) edge node {} (19)
(42) edge node {} (10);
 \path[draw, very thick, red]
(43) edge node {} (1)
(43) edge node {} (5)
(43) edge node {} (42);
\end{tikzpicture}
	\caption{The polyhedra $\mathcal Q$ and $\mathcal Q^+$.}
	\label{fig107}
\end{figure}	
	
Below we will use L\"obell polyhedra $\mathcal L(n)$ to construct new compact right-angled polyhedra.
		
\begin{example} \label{ex:tower}
{\rm For an integer $k \geq 1$, denote by $\mathcal L_k(n)$ the polyhedron with $2n(k+1)$ vertices constructed from $k$ copies of the polyhedron $\mathcal L(n)$ by successively gluing them along $n$-gonal faces. In particular, $\mathcal L_1(n) = \mathcal L(n)$. As an example, in Fig.~\ref{fig9} we show a net of the lateral surface of the polyhedron $\mathcal L_3(6)$. The polyhedra $\mathcal L_k(n)$ will be called \textit{towers of L\"obell polyhedra}.
}
 \end{example}

\begin{figure}[ht]
    \centering
    \setlength{\unitlength}{1.0mm}
    \begin{tikzpicture}[scale=1.] 
\coordinate (A0) at (0, 0) ;
\coordinate (A1) at (1, 0) ;
\coordinate (A2) at (2, 0) ;
\coordinate (A3) at (3, 0) ;
\coordinate (A4) at (4, 0) ;
 \coordinate (A5) at (5, 0) ;
  \coordinate (A6) at (6, 0) ;
\foreach \v in {A0,A1,A2,A3,A4,A5,A6}{
\fill[black] (\v) circle (2pt);}
\coordinate (B0) at (0, 0.5) ;
\coordinate (B1) at (1, 0.5) ;
\coordinate (B2) at (2, 0.5) ;
\coordinate (B3) at (3, 0.5) ;
\coordinate (B4) at (4, 0.5) ;
 \coordinate (B5) at (5, 0.5) ;
  \coordinate (B6) at (6, 0.5) ;
\foreach \v in {B0,B1,B2,B3,B4,B5,B6}{
\fill[black] (\v) circle (2pt);}
\coordinate (C0) at (0.5, 1) ;
\coordinate (C1) at (1.5, 1) ;
\coordinate (C2) at (2.5, 1) ;
\coordinate (C3) at (3.5, 1) ;
\coordinate (C4) at (4.5, 1) ;
 \coordinate (C5) at (5.5, 1) ;
  \coordinate (C6) at (6.5, 1) ;
\foreach \v in {C0,C1,C2,C3,C4,C5,C6}{
\fill[black] (\v) circle (2pt);}
\coordinate (D0) at (0.5, 2) ;
\coordinate (D1) at (1.5, 2) ;
\coordinate (D2) at (2.5, 2) ;
\coordinate (D3) at (3.5, 2) ;
\coordinate (D4) at (4.5, 2) ;
 \coordinate (D5) at (5.5, 2) ;
  \coordinate (D6) at (6.5, 2) ;
\foreach \v in {D0,D1,D2,D3,D4,D5,D6}{
\fill[black] (\v) circle (2pt);}
\coordinate (E0) at (0, 2.5) ;
\coordinate (E1) at (1, 2.5) ;
\coordinate (E2) at (2, 2.5) ;
\coordinate (E3) at (3, 2.5) ;
\coordinate (E4) at (4, 2.5) ;
 \coordinate (E5) at (5, 2.5) ;
  \coordinate (E6) at (6, 2.5) ;
\foreach \v in {E0,E1,E2,E3,E4,E5,E6}{
\fill[black] (\v) circle (2pt);}
\coordinate (G0) at (0, 3.5) ;
\coordinate (G1) at (1, 3.5) ;
\coordinate (G2) at (2, 3.5) ;
\coordinate (G3) at (3, 3.5) ;
\coordinate (G4) at (4, 3.5) ;
 \coordinate (G5) at (5, 3.5) ;
  \coordinate (G6) at (6, 3.5) ;
\foreach \v in {G0,G1,G2,G3,G4,G5,G6}{
\fill[black] (\v) circle (2pt);}
\coordinate (H0) at (0.5, 4) ;
\coordinate (H1) at (1.5, 4) ;
\coordinate (H2) at (2.5, 4) ;
\coordinate (H3) at (3.5, 4) ;
\coordinate (H4) at (4.5, 4) ;
 \coordinate (H5) at (5.5, 4) ;
  \coordinate (H6) at (6.5, 4) ;
\foreach \v in {H0,H1,H2,H3,H4,H5,H6}{
\fill[black] (\v) circle (2pt);}
\coordinate (K0) at (0.5, 4.5) ;
\coordinate (K1) at (1.5, 4.5) ;
\coordinate (K2) at (2.5, 4.5) ;
\coordinate (K3) at (3.5, 4.5) ;
\coordinate (K4) at (4.5, 4.5) ;
 \coordinate (K5) at (5.5, 4.5) ;
  \coordinate (K6) at (6.5, 4.5) ;
\foreach \v in {K0,K1,K2,K3,K4,K5,K6}{
\fill[black] (\v) circle (2pt);}
\draw[very thick] (A0) -- (B0) -- (C0) -- (D0) -- (E0) -- (G0) -- (H0) --(K0);
\draw[very thick] (A1) -- (B1) -- (C1) -- (D1) -- (E1) -- (G1) -- (H1) --(K1);
\draw[very thick] (A2) -- (B2) -- (C2) -- (D2) -- (E2) -- (G2) -- (H2) --(K2);
\draw[very thick] (A3) -- (B3) -- (C3) -- (D3) -- (E3) -- (G3) -- (H3) --(K3);
\draw[very thick] (A4) -- (B4) -- (C4) -- (D4) -- (E4) -- (G4) -- (H4) --(K4);
\draw[very thick] (A5) -- (B5) -- (C5) -- (D5) -- (E5) -- (G5) -- (H5) --(K5);
\draw[very thick] (A6) -- (B6) -- (C6) -- (D6) -- (E6) -- (G6) -- (H6) --(K6);
\draw[very thick] (A0) -- (A6); 
\draw[very thick] (K0) -- (K6); 
\draw[very thick] (C0) -- (B1); 
\draw[very thick] (C1) -- (B2); 
\draw[very thick] (C2) -- (B3); 
\draw[very thick] (C3) -- (B4); 
\draw[very thick] (C4) -- (B5);  
\draw[very thick] (C5) -- (B6);  
\draw[very thick] (D0) -- (E1); 
\draw[very thick] (D1) -- (E2); 
\draw[very thick] (D2) -- (E3); 
\draw[very thick] (D3) -- (E4); 
\draw[very thick] (D4) -- (E5);  
\draw[very thick] (D5) -- (E6); 
\draw[very thick] (H0) -- (G1); 
\draw[very thick] (H1) -- (G2); 
\draw[very thick] (H2) -- (G3); 
\draw[very thick] (H3) -- (G4); 
\draw[very thick] (H4) -- (G5);  
\draw[very thick] (H5) -- (G6); 
\draw[dashed, red] (-1,1.5) -- (7,1.5); 
\draw[dashed, red] (-1,3) -- (7,3); 
   \draw(-1,0.75) node {$\mathcal L(6)$};
      \draw(-1,2.25) node {$\mathcal L(6)$};
         \draw(-1,3.75) node {$\mathcal L(6)$};
    \end{tikzpicture}
    \caption{Lateral surface of the polyhedron $\mathcal L_3(6)$.}
    \label{fig9}   
\end{figure}
	
\begin{remark} \label{note}
{\rm The polyhedron $\mathcal L(n)$ has $2n$ mutually isometric pentagonal faces adjacent to the two $n$-gonal faces. By construction, the tower $\mathcal L_k(n)$ has $2n$ mutually isometric pentagonal faces of the same kind adjacent to the upper or lower $n$-gonal face of the tower.
}
\end{remark}
	
The following property was established in~\cite{VesRep}.

\begin{theorem} \cite[Theorem~3]{VesRep} \label{theorem300}
For every integer $k \geq 1$, the following property holds for towers of compact right-angled hyperbolic L\"obell polyhedra $\mathcal L_k(n)$:
\begin{equation} \label{eqn6}
\lim_{n\to\infty} \frac{\vol(\mathcal L_k(n))}{\ver(\mathcal L_k(n))} = \frac{k}{k+1} \cdot \frac{5\vtet}{8},
\end{equation}
where $\vtet = 3\Lambda(\frac{\pi}{3})$ is the volume of the regular ideal hyperbolic tetrahedron. 
\end{theorem}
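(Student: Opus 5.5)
We have to prove Theorem~\ref{theorem300}: for fixed $k$, $\vol(\mathcal L_k(n))/\ver(\mathcal L_k(n)) \to \frac{k}{k+1}\cdot\frac{5\vtet}{8}$ as $n\to\infty$.

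\textbf{Reduction to the volume asymptotics of $\mathcal L(n)$.} Since $\ver(\mathcal L_k(n)) = 2n(k+1)$, the claim is equivalent to
$$\lim_{n\to\infty}\frac{\vol(\mathcal L_k(n))}{n} = \frac{5k\vtet}{4}.$$
The tower is obtained by gluing $k$ copies of $\mathcal L(n)$ along their $n$-gonal faces. All dihedral angles are $\pi/2$, and the $n$-gonal faces of $\mathcal L(n)$ are congruent (by the symmetry exchanging the two bases). Hence the gluing is geometric: the copies are reflected across the common base planes, the lateral pentagons of consecutive copies meet at angle $\pi/2+\pi/2=\pi$ and merge into one face, and the result is a right-angled polyhedron. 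It is compact right-angled, and by Andreev's uniqueness in Theorem~\ref{theorem2.1} it coincides with $\mathcal L_k(n)$. The merging of pentagons is exactly what the combinatorics in Fig.~\ref{fig9} shows. Therefore $\vol(\mathcal L_k(n)) = k\,\vol(\mathcal L(n))$. I should check that the merged faces (pentagon plus pentagon across a shared edge of the base) give the hexagon/pentagon pattern in the net; the figure suggests they do. So it suffices to prove
$$\lim_{n\to\infty}\frac{\vol(\mathcal L(n))}{n} = \frac{5\vtet}{4}.$$

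\textbf{Asymptotics of formula~\eqref{eqn5}.} As $n\to\infty$, $\cos(\pi/n)\to1$, so $\arccos\bigl(1/(2\cos(\pi/n))\bigr)\to\pi/3$ and $\theta_n\to\pi/6$. The Lobachevsky function $\Lambda$ is continuous, so the bracket in~\eqref{eqn5} tends to
$$2\Lambda(\pi/6)+\Lambda(\pi/6)+\Lambda(\pi/6)-\Lambda(-\pi/6) = 5\Lambda(\pi/6),$$
using that $\Lambda$ is odd. Hence $\vol(\mathcal L(n))/n \to \frac52\Lambda(\pi/6)$.

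It remains to evaluate $\Lambda(\pi/6)$. The duplication formula $\Lambda(2\theta)=2\Lambda(\theta)+2\Lambda(\theta+\pi/2)$ with $\theta=\pi/6$, together with $\pi$-periodicity and oddness, gives
$$\Lambda(\pi/3)=2\Lambda(\pi/6)+2\Lambda(2\pi/3)=2\Lambda(\pi/6)-2\Lambda(\pi/3).$$
Therefore $\Lambda(\pi/6)=\frac32\Lambda(\pi/3)=\frac12\vtet$, and the limit is $\frac54\vtet$, as required. Dividing by $2(k+1)$ and multiplying by $k$ yields~\eqref{eqn6}.

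\textbf{Main obstacle.} The only delicate point is justifying $\vol(\mathcal L_k(n))=k\vol(\mathcal L(n))$, that is, that the geometric gluing has no dihedral angle of $\pi$ left over and really realizes the combinatorial tower. I would argue it via reflection symmetry and Andreev uniqueness as above. Alternatively, one can use the one-sided inequality from \cite[Th.~6.4]{In08}, $\vol(\mathcal L_k(n))\ge k\vol(\mathcal L(n))$, combined with an upper bound from the explicit decomposition. The Lobachevsky limit itself is routine.
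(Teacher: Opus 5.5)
Your proof is correct and complete. The paper gives no proof of this statement; it imports it from \cite{VesRep}. Your argument uses exactly the ingredients the paper supplies: $\ver(\mathcal L_k(n))=2n(k+1)$, additivity $\vol(\mathcal L_k(n))=k\,\vol(\mathcal L(n))$ by the same face-merging argument as in Lemma~\ref{lemma5.1} (applied to $n$-gons instead of pentagons), and the limit of formula~\eqref{eqn5} with $\theta_n\to\pi/6$ and $\Lambda(\pi/6)=\tfrac32\Lambda(\pi/3)=\tfrac12\vtet$. It also reproduces the paper's stated special case $\lim_{n\to\infty}\omega(\mathcal L(n))=\tfrac{5}{16}\vtet$.

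The hedging in your last paragraph is unnecessary. The two lateral faces through an edge of a glued base both lie in the unique plane through that edge orthogonal to the base plane, so they merge automatically. Volume equality therefore holds outright, and you do not need the one-sided inequality of \cite[Th.~6.4]{In08}.
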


In particular, for $k=1$ we obtain $\lim\limits_{n\to\infty} \omega(\mathcal L(n)) = \frac{5}{16}\vtet$, while $\lim\limits_{k,n\to\infty} \omega(\mathcal L_k(n)) = \frac{5}{8} \vtet$.
	
Estimates for volumes of compact right-angled polyhedra were obtained by Atkinson~\cite{At09}.

\begin{theorem} \cite[Theorem~2.3]{At09}.  \label{theorem3.2}
If a compact right-angled hyperbolic polyhedron $\mathcal P$ has $\ver(\mathcal P)$ vertices, then
\begin{equation} \label{eqn7}
\frac{\voct}{32}(\ver(\mathcal P)-8) \leq \vol(\mathcal P) < \frac{5\vtet}{8}(\ver(\mathcal P)-10),  
\end{equation}
where $\vtet = 3 \Lambda(\frac{\pi}{3})$ is the volume of the regular ideal hyperbolic tetrahedron. Moreover, there exists a sequence of polyhedra $\{R_i\} \subset \mathcal{R}_{comp}$ with $\ver(R_i)$ vertices such that $\lim_{i \to \infty} \frac{\vol(R_i)}{\ver(R_i)} = \frac{5}{8} \vtet$.
\end{theorem}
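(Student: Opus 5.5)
The statement is Atkinson's estimate, so the plan follows the three separate parts: the lower bound, the upper bound, and asymptotic sharpness. The sharpness part needs nothing new. The towers $\mathcal L_k(n)$ of Example~\ref{ex:tower} are compact right-angled. By Theorem~\ref{theorem300}, $\omega(\mathcal L_k(n))\to \frac{k}{k+1}\cdot\frac{5\vtet}{8}$ as $n\to\infty$. A diagonal choice $R_i=\mathcal L_{k_i}(n_i)$ with $k_i\to\infty$ and $n_i$ large enough relative to $k_i$ then gives $\vol(R_i)/\ver(R_i)\to\frac58\vtet$.

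For the upper bound I would decompose $\mathcal P$ into cones from one finite vertex $v$. Every face $F$ not containing $v$ spans a pyramid with apex $v$ and base $F$, so $\vol(\mathcal P)=\sum_{F\not\ni v}\vol(\mathrm{Pyr}(v,F))$. For a $k$-gonal face, I would bound the pyramid by the maximal-volume pyramid with a $k$-gonal base. The comparison is to push $v$ and the vertices of $F$ to the sphere at infinity; monotonicity of volume under this degeneration follows from the Schläfli formula. This gives a bound linear in $k$, of the form $c\,(k-2)$ or $c\,k-d$ with constants coming from $\Lambda$. Summing over the $F-3$ faces avoiding $v$ and using $\sum_F k=3V$ and $2F=V+4$ (the same count that gives \eqref{eqn4}) turns the bound into $\alpha V-\beta$. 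The three faces at $v$ have at least $5$ sides each, which sharpens the additive constant. The main obstacle is the per-pyramid constant: a crude ideal-pyramid bound $(k-2)\vtet$ gives $2\vtet V$, far from $\frac58\vtet$. I would first try to gain the factor from right-angledness, using that each base is a right-angled $k$-gon of area $(k-4)\pi/2$, and splitting each cone into tetrahedra around the foot of the perpendicular from $v$. If that is too weak, I would instead double $\mathcal P$ along faces into a hyperbolic orbifold and apply a volume bound for the doubled object, which is presumably how $\frac58\vtet$ and the strict inequality arise.

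For the lower bound $\frac{\voct}{32}(V-8)$ I would use the guts estimate of Agol--Storm--Thurston. By the Four Colour Theorem, colour the faces of $\mathcal P$ with four colours. Reflecting across the faces of the colours gives a hyperbolic manifold $M$ tiled by a fixed number of copies of $\mathcal P$; the factor $\frac1{32}$ should come from $8$ copies, together with the factor $\frac14$ discussed next. Inside $M$, the preimage of the faces of one or two colour classes is a totally geodesic, hence incompressible, surface $S$. The estimate then gives
\[
\vol(M)\ \ge\ \voct\,|\chi(M\setminus\!\setminus S)|/2 .
\]
I would compute $\chi$ combinatorially: it is a count of vertices of $\mathcal P$ not on the cut faces, minus a constant from Euler's formula. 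Choosing the best colour class by averaging gives at least a quarter of the $V-8$ available terms, which yields the stated bound. The delicate points here are checking that the cut manifold has the required guts structure and keeping the additive constant at $-8$.
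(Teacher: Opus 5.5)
The paper does not prove this statement; it quotes it from Atkinson~[At09]. So your proposal has to stand on its own.

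\textbf{Sharpness.} This part is fine. Gluing along $n$-gons is volume-additive, so $\omega(\mathcal L_k(n))=\frac{k}{k+1}\cdot\frac{\vol(\mathcal L(n))}{2n}$. Theorem~\ref{theorem300} comes from the explicit formula \eqref{eqn5}, not from Atkinson, and it gives the limit $\frac58\vtet$ along any $k,n\to\infty$.

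\textbf{Lower bound.} The strategy is the right one: four-colouring, the index-$8$ manifold cover $M$, and the totally geodesic surface $S$ over \emph{one} colour class. Two classes meet along edges and do not give a surface. However, your displayed inequality is off by a factor of $2$.
\begin{itemize}
\item For the manifold $M\setminus\!\setminus S$ with totally geodesic boundary, Miyamoto's bound (equivalently Agol--Storm--Thurston, with the guts being everything) is $\vol(M)\ge\voct\,|\chi(M\setminus\!\setminus S)|$, with no factor $\frac12$.
\item By Gauss--Bonnet, $|\chi(M\setminus\!\setminus S)|=|\chi(S)|=\sum_{F\text{ of colour }1}(k_F-4)$, since each such face has four lifts of area $(k_F-4)\pi/2$. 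This counts vertices \emph{on} the cut faces minus four per face, not vertices off them.
\item Summing the four colour inequalities gives $32\vol(\mathcal P)\ge\voct\sum_F(k_F-4)=\voct(V-8)$. With your $\frac12$ you would only get $\frac{\voct}{64}(V-8)$.
\end{itemize}

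\textbf{Upper bound: the genuine gap.} This is the substantive part of the theorem, and you do not prove it. Neither route you suggest reaches $\frac58\vtet$.
\begin{itemize}
\item Your right-angled refinement gives at best the following. A cone from a point over a planar region has volume less than half its area, so $\vol(\mathcal P)<\frac{\pi}{4}\sum_{F\not\ni v}(k_F-4)\le\frac{\pi}{4}(V-11)$. Its slope $\frac{\pi}{4}\approx 0.785$ still exceeds $\frac58\vtet\approx 0.634$.
\item Pushing vertices to infinity is the crude bound you already discarded. Also, the claimed Schl\"afli monotonicity for moving vertices is not justified.
\item ``Double $\mathcal P$ and apply a volume bound for the doubled object'' names no inequality at all.
\end{itemize}
The constant $\frac58\vtet$ is sharp. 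It is approached by the bulk of the towers $\mathcal L_k(n)$, which consists of right-angled hexagons carrying asymptotically $\frac54\vtet$ each, i.e.\ $\frac58\vtet$ per vertex. So any proof needs an estimate that is sharp exactly at this hexagonal configuration.

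A face-by-face bound $\frac58\vtet(k_F-4)$ is also impossible. The cone from a far vertex of a tall tower $\mathcal L_k(5)$ over its top right-angled pentagon has volume about $0.70>\frac58\vtet$. The pentagons' excess would have to be paid for globally, for instance through \eqref{eqn4}. Without such an argument, the slope $\frac58\vtet$, the shift $-10$, and the strict inequality all remain unproved.
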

		
As noted above, the compact right-angled hyperbolic polyhedron with the smallest number of vertices is the dodecahedron. Thus, in formula~(\ref{eqn7}) it is assumed that $\ver(\mathcal P) \geq 20$. Subsequently, the upper estimate in~(\ref{eqn7}) was improved in~\cite{EV20-2} in the case $\ver(\mathcal P) \geq 24$ and in~\cite{ABEV} in the case $\ver(\mathcal P) \geq 81$.

Consider the set $\mathcal R^n_{comp} = \{ \mathcal P \in \mathcal R_{comp} \, \mid \ver(\mathcal P) = n\}$, i.e., the set of compact right-angled polyhedra having exactly $n$ vertices. Since the one-dimensional skeletons of polyhedra from $\mathcal R^n_{comp}$ are 3-valent graphs, the number of vertices $\ver(\mathcal P)$ is even. For even $20 \leq n \leq 46$, the number of elements $| \mathcal R^n_{comp}| $ in the set $\mathcal R^n_{comp}$, the number of distinct normalized volumes $|\Omega(\mathcal R^n_{comp})|$, and the minimum $\min \Omega(\mathcal R^n_{comp})$ and maximum $\max \Omega(\mathcal R^n_{comp})$ values of normalized volumes are given in Table~\ref{table2}.

\begin{table}[ht] \caption{Density of normalized volumes in the compact case.} \label{table2} 
\begin{tabular}{|r|r|r|r|r|} \hline
$n$ & $| \mathcal R^n_{comp}| $ & $ |\Omega(\mathcal R^n_{comp})|$ & $\min \Omega(\mathcal R^n_{comp})$ & $\max \Omega(\mathcal R^n_{comp})$  \\ \hline \hline 			
20 & 1 & 1 & 0.215310  & 0.215310 \\ \hline 
22 & 0 & 0 & - & -  \\ \hline 
24 & 1 & 1 & 0.250960  & 0.250960 \\ \hline 
26 & 1 & 1 & 0.267961 & 0.267961 \\ \hline
28 & 3 & 3 & 0.270116  & 0.285722 \\ \hline
30 & 4 & 4 & 0.287080  & 0.298220 \\ \hline
32 & 12 & 12 & 0.281845  & 0.311786 \\ \hline
34 & 23 & 23 & 0.286201 &0.323119  \\ \hline
36 & 71 & 71 & 0.289334 & 0.335671 \\ \hline
38 & 187 & 187 &  0.291020  & 0.345760 \\ \hline
40 & 627 & 627 & 0.292711 & 0.355566 \\ \hline
42 & 1970 & 1952 & 0.294115  & 0.364289  \\ \hline
44 & 6833 & 6771 & 0.295366 & 0.372678 \\ \hline
46 & 23384 & 23082 & 0.296473 & 0.380143 \\ \hline
\end{tabular}
\end{table}

\section{Proof of Theorem~\ref{theorem1}} \label{main-ideal}

\begin{proof}[Proof of Theorem~\ref{theorem1}]
	We show that $\Omega(\mathcal R_{ideal}) \subset \left[ \frac{1}{6} \voct, \frac{1}{2} \voct \right)$, where the lower bound is attained on the octahedron and the upper bound is sharp.
	Inequality~\eqref{eqn3} implies
	$$
	\frac{\voct}{4} \frac{\ver(\mathcal P)-2}{\ver(\mathcal P)} \leq \omega(\mathcal P) \leq \frac{\voct}{2} \frac{\ver(\mathcal P)-4}{\ver(\mathcal P)}, 
	$$
	where $\omega(\mathcal P) = \vol(\mathcal P)/\ver(\mathcal P)$ and $\ver(\mathcal P) \geq 6$.
	Consider the lower estimate. Since, for $N \geq 6$, the function $f(N) = \frac{N-2}{N}$ increases monotonically, we have $\min_{N\geq 6} f(N) = f(6) = \frac{2}{3}$, whence $\omega(\mathcal P) \geq \frac{1}{6} \voct$, and equality is attained on the octahedron.
	Consider the upper estimate. For $N \geq 6$, the function $g(N) = \frac{N-4}{N}$ increases monotonically and tends to $1$ as $N \to \infty$; hence, if $\ver(\mathcal P) \geq 6$, then the inequality $\omega(\mathcal P) < \frac{1}{2} \voct$ holds.
	At the same time, since, by Theorem~\ref{theorem2.3}, the upper estimate in inequality~(\ref{eqn3}) is asymptotically sharp, there exists a sequence of polyhedra whose normalized volumes tend to $\frac{1}{2}\voct$.
	Therefore, the upper bound of the spectrum is equal to $\frac{1}{2}\voct$, but it is not attained.
	
	We show that the spectrum is discrete on the interval $\left[ \frac{1}{6} \voct, \frac{1}{4} \voct \right)$.
	Indeed, given $\frac{1}{6} \voct < C < \frac{1}{4} \voct$, consider the interval $\left[ \frac{1}{6} \voct, C \right]$.
	Assume that $\omega(\mathcal P) \in \left[ \frac{1}{6} \voct, C \right]$.
	Then the lower estimate in inequality~(\ref{eqn3}) implies
	$$
	\frac{\voct}{4} \left(1 - \frac{2}{\ver(\mathcal P)}\right) \leq \omega(\mathcal P) \leq C.
	$$
	It follows that 
	$$
	1 - \frac{2}{\ver(\mathcal P)} \leq \frac{4C}{\voct} \quad \text{and hence} \quad \ver(\mathcal P) \leq \frac{2 \voct}{\voct-4C}.
	$$
	By the choice of $C$, the expression on the right-hand side of the inequality is positive.
	Thus, the number of vertices $\ver(\mathcal P)$ is bounded above by a quantity depending only on $C$ and the constant $\voct$.
	
	According to Theorem~\ref{theorem2.1}, an ideal right-angled hyperbolic polyhedron is determined uniquely up to isometry by its 1-skeleton, which is a 3-connected planar 4-valent graph.
	Since the number of such graphs with a given number of vertices is finite, the set of possible volume values, and hence of possible normalized volume values, is also finite.
	Thus, every interval $\left[ \frac{1}{6} \voct, C \right]$, where $C < \frac{1}{4} \voct$, contains only finitely many values of the spectrum.
	Consequently, the spectrum is discrete on the interval $\left[ \frac{1}{6} \voct, \frac{1}{4} \voct \right)$.

	Now we establish that the spectrum of normalized volumes is dense on the interval $\left[ \frac{1}{4} \voct, \frac{1}{2} \voct \right]$.
	For convenience of exposition, introduce auxiliary quantities. For a polyhedron $\mathcal P \in \mathcal R_{ideal}$, define the \textit{modified number of vertices} $\tN(\mathcal P)$ by the formula  
	$\tN(\mathcal P) = \ver(\mathcal P) - 3$, and also define the \emph{modified normalized volume} $\tomega(\mathcal P)$ by setting:
	$$
	\tomega(\mathcal P) = \frac{\vol(\mathcal P)}{\tN(\mathcal P)}.
	$$		
	By equality~(\ref{eqn1}), every polyhedron from $\mathcal{R}_{ideal}$ has at least eight triangular faces.
	It is well known (see, for instance,~\cite[Prop. 2.5]{FG2011}) that in Lobachevsky geometry every two ideal triangles are isometric to each other.
		This property makes it possible to correctly define the operation of a \textit{connected sum}, i.e., gluing two ideal right-angled polyhedra along their triangular faces.
	Let $\mathcal P_1, \mathcal P_2 \in \mathcal{R}_{ideal}$. Choose triangular faces $T_1 \subset \mathcal P_1$ and $T_2 \subset \mathcal P_2$.
	Since $T_1$ and $T_2$ are isometric, we can glue $\mathcal P_1$ and $\mathcal P_2$ along these faces.
	Denote the resulting polyhedron by $\mathcal P_1 \# \mathcal P_2$. 
	
	\begin{lemma} \label{lemma4.1}
		Under the operation of connected sum of ideal right-angled polyhedra along triangular faces, volume behaves additively: 
		$$
		\vol(\mathcal P_1 \# \mathcal P_2) = \vol(\mathcal P_1) + \vol(\mathcal P_2),
		$$
		and the number of vertices changes as follows:
		$$
		\ver(\mathcal P_1 \# \mathcal P_2) = \ver(\mathcal P_1) + \ver(\mathcal P_2) - 3.
		$$
		Moreover, the polyhedron $\mathcal P_1 \# \mathcal P_2$ is ideal right-angled.
	\end{lemma}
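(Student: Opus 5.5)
The plan is to handle the three assertions in order: vertex count, right-angledness, and volume additivity.

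\emph{Vertex count.} The gluing identifies the three ideal vertices of $T_1$ with those of $T_2$. Every other vertex of $\mathcal P_1$ and $\mathcal P_2$ survives unchanged. This immediately gives $\ver(\mathcal P_1 \# \mathcal P_2) = \ver(\mathcal P_1) + \ver(\mathcal P_2) - 3$, provided we check that the identified vertices remain vertices of the union. They do, because the union is still bounded near such a point by faces of both pieces.

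\emph{Right-angledness and convexity.} This is the main point. Along each edge $e$ of $T_1$, $\mathcal P_1$ has a face $F_1 \ne T_1$ meeting $T_1$ at angle $\pi/2$. Likewise, along the corresponding edge of $T_2$, $\mathcal P_2$ has a face $F_2$ meeting $T_2$ at angle $\pi/2$. After gluing, $\mathcal P_1$ and $\mathcal P_2$ lie on opposite sides of the common plane $H$ of $T_1 = T_2$. Both $F_1$ and $F_2$ are orthogonal to $H$ and contain the same line $e$. Hence they lie in the same hyperbolic plane, namely the unique plane through $e$ orthogonal to $H$.

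So $F_1$ and $F_2$ merge into a single face of the union. The dihedral angle along $e$ becomes $\pi/2+\pi/2=\pi$, and $e$ disappears as an edge.

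Convexity follows from a half-space description. Each $\mathcal P_i$ is an intersection of half-spaces. Write $\mathcal P_1 = \{H^-\} \cap (\text{half-spaces } S_j)$ and $\mathcal P_2 = \{H^+\} \cap (\text{half-spaces } S'_k)$. The faces of $\mathcal P_1$ adjacent to $T_1$ and the faces of $\mathcal P_2$ adjacent to $T_2$ correspond to the same three planes. Their remaining faces are separated from $H$. I would show that the union equals the intersection of the three common half-spaces with those half-spaces of $\mathcal P_1$ not adjacent to $T_1$ and those of $\mathcal P_2$ not adjacent to $T_2$.

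The cleanest route is reflection symmetry. Reflection in $H$ preserves the three orthogonal planes. Faces of $\mathcal P_1$ not adjacent to $T_1$ are either disjoint from $H$ or meet it only at ideal vertices of $T_1$. Therefore the half-space they bound contains all of $H^+ \cap (\text{three common half-spaces})$'s relevant portion. This is the step I expect to be most delicate.

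An alternative is combinatorial. Check that the glued cell complex satisfies the conditions of Theorem~\ref{theorem2.1}:
\begin{itemize}
\item all vertices are 4-valent, since at each identified vertex two edges vanish and the merged faces give valency $4$;
\item no prismatic 4-circuits arise;
\item condition (3) holds.
\end{itemize}
These give a realization $\mathcal Q$. By uniqueness, $\mathcal Q$ is the union, because the union of the two pieces already provides a realization locally. Either way, all remaining dihedral angles are the original $\pi/2$. All vertices remain ideal, so $\mathcal P_1 \# \mathcal P_2 \in \mathcal R_{ideal}$.

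\emph{Volume.} Once the union is known to be a polyhedron, the pieces $\mathcal P_1$ and $\mathcal P_2$ have disjoint interiors, since they lie on opposite sides of $H$, and they meet in the measure-zero set $T$. Therefore $\vol(\mathcal P_1 \# \mathcal P_2) = \vol(\mathcal P_1) + \vol(\mathcal P_2)$.
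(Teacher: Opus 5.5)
Your plan follows the paper's proof. The faces adjacent to $T_1$ and $T_2$ lie in the three common planes through the edges of $T$ orthogonal to $H$, and they merge along flat angles. All other dihedral angles stay $\pi/2$, the six vertices of $T_1,T_2$ become three ideal $4$-valent vertices, and volume adds because the pieces meet only in $T$.

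The paper takes the convexity of $\mathcal P_1\cup\mathcal P_2$ for granted. You rightly single it out, but the step you offer for it is not justified as written. The fact that a face $F_S$ of $\mathcal P_1$ not adjacent to $T_1$ meets $H$ at most in ideal vertices is a statement about the polygon $F_S$. By itself it does not tell you on which side of the \emph{plane} $S$ of $F_S$ the region on the other side of $H$ (cut out by the three common half-spaces) lies. Reflection in $H$ does not help, since it does not preserve $S$.

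You can close this gap in either of two ways.

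\emph{Route 1.} Quote the classical fact (Andreev, Vinberg) that in a polyhedron with non-obtuse dihedral angles the planes of non-adjacent faces do not intersect in $\mathbb H^3$. Then $S\cap H=\emptyset$, so $S$ lies strictly on the $\mathcal P_1$-side of $H$. Hence the whole closed half-space on the other side of $H$, and in particular $\mathcal P_2$, lies on the same side of $S$ as $\mathcal P_1$.

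\emph{Route 2.} Argue directly by convexity. Let $W$ be the intersection of the three common half-spaces, so $W\cap H=T$. Let $S^-$ be the open half-space of $S$ not containing $\mathcal P_1$. Then $S^-\cap W\cap H=S^-\cap T=\emptyset$, so the convex set $S^-\cap W$ lies on one side of $H$. It contains points near a relative interior point of $F_S$, and such a point lies in the interior of $W$ and strictly on the $\mathcal P_1$-side of $H$. Hence $S^-\cap W$ misses the closed half-space on the other side of $H$, which contains $\mathcal P_2$.

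Doing this for all such faces of both pieces shows that $\mathcal P_1\cup\mathcal P_2$ is the intersection of $W$ with all remaining face half-spaces. It is therefore a convex polyhedron, and the rest of your argument goes through.

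In your alternative combinatorial route, the final appeal to uniqueness is circular. The uniqueness in Theorem~\ref{theorem2.1} concerns convex realizations, so it cannot identify the realization $\mathcal Q$ with the union before the union is known to be convex. The verifications of conditions (3) and (4) in that route are also only asserted, not carried out.
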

	\begin{proof}
			When right-angled polyhedra are glued, the faces adjacent to $T_1$ and $T_2$ pairwise form flat dihedral angles of size $\pi$ and merge into complete geodesic hyperfaces. The right angles with the remaining faces are preserved; hence $\mathcal P_1 \# \mathcal P_2$ is again a right-angled polyhedron.
			Since the two-dimensional gluing face has zero three-dimensional volume, the equality $\vol(\mathcal P_1 \# \mathcal P_2) = \vol(\mathcal P_1) + \vol(\mathcal P_2)$ holds. When the gluing faces are identified, the three ideal vertices of the triangle $T_1$ are identified pairwise with the three ideal vertices of the triangle $T_2$. Since these vertices lie on the absolute, after gluing they remain ideal vertices. Thus, the $6$ original vertices corresponding to $T_1$ and $T_2$ merge into $3$ vertices, which gives the formula $\ver(\mathcal P_1 \# \mathcal P_2) = \ver(\mathcal P_1) + \ver(\mathcal P_2) - 3$. The lemma is proved.
	\end{proof}
	
			Observe that under the operation of connected sum the modified number of vertices behaves additively:
	\begin{eqnarray*}
		\tN(\mathcal P_1 \# \mathcal P_2) & = & (\ver(\mathcal P_1) + \ver(\mathcal P_2) - 3) - 3 \cr & = & (\ver(\mathcal P_1) - 3) + (\ver(\mathcal P_2) - 3) = \tN(\mathcal P_1) + \tN(\mathcal P_2).
	\end{eqnarray*}
	Compute the modified normalized volume of the connected sum:
	\begin{equation} \label{eqn800}
		\tomega(\mathcal P_1 \# \mathcal P_2) = \frac{\vol(\mathcal P_1) + \vol(\mathcal P_2)}{\tN(\mathcal P_1) + \tN(\mathcal P_2)} = \frac{\tN(\mathcal P_1)\cdot\tomega(\mathcal P_1) + \tN(\mathcal P_2)\cdot\tomega(\mathcal P_2)}{\tN(\mathcal P_1) + \tN(\mathcal P_2)}.
	\end{equation} 
		Thus, the value $\tomega$ for the connected sum of polyhedra behaves as the weighted average of the values $\tomega$ of the summands, with the modified numbers of vertices serving as weights.

	Consider the sequence of polyhedra $\#_k \mathcal P$ obtained by successively gluing $k \geq 1$ copies of the same polyhedron $\mathcal P$.
		Clearly, $\tomega(\#_k \mathcal P) = \tomega(\mathcal P)$.
		Observe that
	$$
	\omega(\#_k \mathcal P) = \frac{\vol(\#_k \mathcal P)}{\ver(\#_k \mathcal P)} = \frac{\vol(\#_k \mathcal P)}{\tN(\#_k \mathcal P) + 3} = \tomega(\#_k \mathcal P) \cdot \frac{\tN(\#_k \mathcal P)}{\tN(\#_k \mathcal P) + 3}.
	$$
	Since $\tN(\#_k \mathcal P) = k \cdot \tN(\mathcal P) \to \infty$ as $k \to \infty$, the fraction on the right tends to $1$.
	Thus,
	$$
	\lim_{k \to \infty} \omega(\#_k \mathcal P) = \tomega(\#_k \mathcal P ) =  \tomega(\mathcal P).
	$$
		Therefore, in order to approximate an arbitrary value in the interval $\left[ \frac{1}{4} \voct, \frac{1}{2} \voct \right]$ by normalized volumes, it suffices to approximate it by modified normalized volumes.

		Suppose that, for polyhedra $\mathcal P_1, \mathcal P_2 \in \mathcal R_{ideal}$, the modified volumes are $\tomega(\mathcal P_1) = A$ and $\tomega(\mathcal P_2) = B$.
		Without loss of generality, assume that $A < B$.
		We show how to construct a sequence of polyhedra whose normalized volumes converge to an arbitrary value in the interval $(A, B)$.
	Denote by $\#_k \mathcal P_1 \#_m \mathcal P_2$ the polyhedron obtained as the connected sum of $k$ copies of $\mathcal P_1$ and $m$ copies of $\mathcal P_2$ along triangular faces.
	Successively applying formula (\ref{eqn800}), we obtain
	$$
	\tomega(\#_k \mathcal P_1 \#_m \mathcal P_2) = \frac{k \cdot \tN(\mathcal P_1) \cdot A + m \cdot \tN(\mathcal P_2) \cdot B}{k \cdot \tN(\mathcal P_1) + m \cdot \tN(\mathcal P_2)}.
	$$
	For arbitrary $\alpha \in (0,1)$ choose sequences of integers $\{k_i\}$ and $\{m_i\}$ such that $(k_i, m_i)=1$ and 
	$$
	\lim_{i \to \infty}  \frac{k_i}{ m_i}   = \frac{\alpha \cdot \tN(\mathcal P_2)}{(1-\alpha) \cdot \tN(\mathcal P_1)}.
	$$ 
	Then 
	$$
	\lim_{i \to \infty} \tomega(\#_{k_i} \mathcal P_1 \#_{m_i} \mathcal P_2) = \alpha \cdot A + (1-\alpha) \cdot B.
	$$
	Consequently, the set of values of $\tomega$ is dense in the interval between any two values $\tomega(\mathcal P_1)$ and $\tomega(\mathcal P_2)$.

	To prove density of normalized volumes on the interval $\left[ \frac{1}{4} \voct, \frac{1}{2} \voct \right]$, we use ideal right-angled $n$-antiprisms $\mathcal A(n)$.
		Using formula~\eqref{eqn2}, we conclude that, as $n \to \infty$,
	$$
	\vol(\mathcal A(n)) \sim \frac{\voct}{2} n.
	$$
	At the same time, $\tN(\mathcal A(n)) = 2n - 3 \sim 2n$.
		Therefore,
	$$
	\lim_{n \to \infty} \tomega(\mathcal A(n)) = \lim_{n \to \infty} \frac{\frac{\voct}{2} n}{2n} = \frac{\voct}{4}.
	$$
	Thus, first, there exists an antiprism $\mathcal A(n)$ with the value $\tomega(\mathcal A(n))$ arbitrarily close to $\frac{1}{4} \voct$.
		Second, according to Theorem~\ref{theorem2.3}, there exists a sequence of polyhedra $\{Q_i\}$ for which $\lim_{i\to \infty} \frac{\vol(Q_i)}{\ver(Q_i)} = \frac{1}{2} \voct$ and hence $\lim_{i\to \infty} \tomega(Q_i) = \frac{1}{2} \voct$.

		For arbitrarily small $\varepsilon$, choose as $\mathcal P_{1, \varepsilon}$ the polyhedron $\mathcal A(n_{\varepsilon})$ for which $|
	\tomega(\mathcal A(n_{\varepsilon})) - \frac{1}{4}\voct | < \varepsilon$, and as $\mathcal P_{2, \varepsilon}$ choose the polyhedron $Q_{i_\varepsilon}$ from Theorem~\ref{theorem2.3} for which $|
		\frac{1}{2} \voct - \tomega(Q_{i_\varepsilon})| < \varepsilon$. Applying to the polyhedra $\mathcal P_{1, \varepsilon}$ and $\mathcal P_{2, \varepsilon}$ the same arguments as those applied above to the polyhedra $\mathcal P_1$ and $\mathcal P_2$, we conclude that the spectrum of normalized volumes is dense on the interval $\left(\frac{1}{4}\voct + \varepsilon, \frac{1}{2} \voct - \varepsilon \right)$.
		Thus, the spectrum of normalized volumes is dense on the interval $\left[ \frac{1}{4} \voct, \frac{1}{2} \voct \right]$. The proof of Theorem~\ref{theorem1} is complete.
\end{proof}
	
\section{Proof of Theorem~\ref{theorem2}} \label{main-compact}

\begin{proof}[Proof of Theorem~\ref{theorem2}]	
	We show that $\Omega(\mathcal R_{comp}) \subset \left[ \frac{5}{192} \voct, \frac{5}{8} \vtet \right)$.
	Dividing all parts of inequality \eqref{eqn7} by $\ver(\mathcal P)$, we obtain the following inequality for the normalized volume:
	$$
	\frac{\voct}{32} \frac{\ver(\mathcal P)-8}{\ver(\mathcal P)} \leq \omega(\mathcal P) < \frac{5\vtet}{8} \frac{\ver(\mathcal P)-10}{\ver(\mathcal P)}.
	$$
	As follows from formula~(\ref{eqn4}), the minimum number of vertices for a polyhedron from $\mathcal{R}_{comp}$ is equal to $20$.
	For $N \geq 20$, the function $h(N) = \frac{N-8}{N} = 1 - \frac{8}{N}$ increases monotonically.
	The minimum values of normalized volumes of polyhedra from $\mathcal{R}_{comp}$ with numbers of vertices from $20$ to $46$ are presented in Table~\ref{table2}.
	Substituting $\ver(\mathcal P)=48$, we obtain
	$$
	\omega(\mathcal P) \geq \frac{\voct}{32} \cdot \frac{48-8}{48} = \frac{5}{192} \voct.
	$$
	As can be seen from Table~\ref{table2}, the value $\frac{5}{192} \voct$ does not exceed the minimum values of normalized volumes of polyhedra with numbers of vertices from $20$ to $46$; therefore, $\frac{5}{192} \voct$ can be taken as an estimate for the lower bound of the spectrum.
	For the upper bound, consider the function $u(N) = \frac{5\vtet}{8} (1 - \frac{10}{N})$. It tends to $\frac{5\vtet}{8}$ as $N \to \infty$.
	Since, by Theorem~\ref{theorem3.2}, the upper estimate in~\eqref{eqn7} is asymptotically sharp, the upper bound of the spectrum is equal to $\frac{5}{8} \vtet$.

	We show that the spectrum is discrete on the interval $\left[ \frac{5}{192} \voct, \frac{1}{32} \voct \right)$.
	Indeed, given $\frac{5}{192} \voct < C < \frac{\voct}{32}$, consider the interval $[\frac{5}{192} \voct, C]$.
	Assume that $\omega(\mathcal P) \in [ \frac{5}{192} \voct, C]$. Then the lower estimate in inequality~\eqref{eqn7} gives
	$$
	\frac{\voct}{32} \left( 1 - \frac{8}{\ver(\mathcal P)} \right) \leq C \quad \text{and hence} \qquad \ver(\mathcal P) \leq \frac{8 \voct}{\voct - 32C} .
	$$
	By the choice of $C$, the expression on the right-hand side of the inequality is positive.
	Thus, the number of vertices $\ver(\mathcal P)$ is bounded above by a quantity depending only on $C$ and the constant $\voct$.

	According to Theorem~\ref{theorem2.1}, a compact hyperbolic polyhedron is determined uniquely up to isometry by its 1-skeleton, which is a 3-connected planar 3-valent graph.
	Since the number of such graphs with a given number of vertices is finite, the set of possible volume values, and hence of possible normalized volume values, is also finite.
	Thus, every interval $\left[ \frac{5}{192} \voct, C \right]$ contains only finitely many values of the spectrum.
	Consequently, the spectrum is discrete on the interval $\left[ \frac{5}{192} \voct, \frac{1}{32} \voct \right)$.

	We show that the spectrum is dense in the interval $\left[\frac{5}{16} \vtet, \frac{5}{8} \vtet \right]$.
	Define the operation of a \textit{connected sum}, or gluing, of two polyhedra from $\mathcal{R}_{comp}$ along two isometric pentagonal faces.

	For a polyhedron $\mathcal P \in \mathcal R_{comp}$, define the modified number of vertices $\tN(\mathcal P) = \ver(\mathcal P) - 10$, and also define the modified normalized volume $\tomega(\mathcal P)$ by setting  
	$$
	\tomega(\mathcal P) = \frac{\vol(\mathcal P)}{\tN(\mathcal P)}.
	$$
	The relation between the usual normalized volume $\omega(\mathcal{P})$ and the modified normalized volume $\tomega(\mathcal{P})$ is given by
	$$ 
	\tomega(\mathcal P) = \frac{\vol(\mathcal P)}{\ver(\mathcal P) - 10} = \omega(\mathcal P) \cdot \frac{\ver(\mathcal P)}{\ver(\mathcal P) - 10}. 
	$$
	Let $\mathcal P_1, \mathcal P_2 \in \mathcal R_{comp}$. Suppose that there exist pentagonal faces $F_1 \subset \mathcal P_1$ and $F_2 \subset \mathcal P_2$ isometric to each other.
	Glue the polyhedra $\mathcal P_1$ and $\mathcal P_2$ along these faces and denote the resulting polyhedron by $\mathcal P_1 \# \mathcal P_2$.
		We will call this operation a \textit{connected sum along pentagonal faces}.
	
	\begin{lemma} \label{lemma5.1}
		Under the operation of connected sum of compact right-angled polyhedra along pentagonal faces, volume behaves additively: 
		$$ 
		\vol (\mathcal P_1 \# \mathcal P_2) = \vol (\mathcal P_1) + \vol (\mathcal P_2), 
		$$ 
		and the number of vertices changes as follows: 
		$$ 
		\ver(\mathcal P_1 \# \mathcal P_2) = \ver(\mathcal P_1) + \ver(\mathcal P_2) - 10. 
		$$
		Moreover, the polyhedron $\mathcal P_1 \# \mathcal P_2$ is compact right-angled.
	\end{lemma}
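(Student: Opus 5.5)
I would mirror the proof of Lemma~\ref{lemma4.1}: first the geometric gluing, then the vertex count. Place $\mathcal P_1$ and $\mathcal P_2$ in $\mathbb H^3$ so that the isometric pentagonal faces $F_1$ and $F_2$ coincide as a common right-angled pentagon $F$ in a plane $\Pi$, with $\mathcal P_1$ and $\mathcal P_2$ on opposite sides of $\Pi$. This is possible because $F_1$ and $F_2$ are assumed isometric, and any isometry between them extends to an isometry of $\mathbb H^3$ preserving $\Pi$; composing with the reflection in $\Pi$ if needed puts the polyhedra on opposite sides.

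Take $\mathcal P = \mathcal P_1 \cup \mathcal P_2$. For each edge $s$ of $F$, the face $G_1$ of $\mathcal P_1$ adjacent to $F$ along $s$ is perpendicular to $\Pi$. The same holds for the corresponding face $G_2$ of $\mathcal P_2$. Both planes contain $s$ and are orthogonal to $\Pi$, so they coincide, and $G_1 \cup G_2$ is a single planar polygon; the dihedral angle along $s$ in $\mathcal P$ is $\pi/2+\pi/2=\pi$, so $s$ disappears as an edge. The step I expect to need the most care is convexity of $\mathcal P$. I would argue it by writing $\mathcal P_i$ as the intersection of half-spaces. The half-space of $\Pi$ and the half-spaces of the five lateral planes $H_s$ are common to both descriptions, up to the side of $\Pi$. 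Near $F$, $\mathcal P$ coincides with the prism cut out by the $H_s$, and away from $\Pi$ each piece is bounded by its own remaining faces. Hence $\mathcal P$ is the intersection of the five lateral half-spaces with the remaining half-spaces of $\mathcal P_1$ (those not bounding $F_1$) and those of $\mathcal P_2$. To check this, I would note that every remaining face of $\mathcal P_1$ lies on the far side of $\Pi$ from $\mathcal P_2$: its plane does not meet $F$'s prism on the $\mathcal P_2$ side, by compactness and right angles (a plane orthogonal-or-disjoint argument), so its half-space contains $\mathcal P_2$. Alternatively, and more safely, I would simply cite \cite[Th.~4.4]{In08}, quoted in Section~\ref{prelim-compact}. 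It says the combinatorial composition of two compact right-angled polyhedra is realizable as a compact right-angled polyhedron, and by the uniqueness in Theorem~\ref{theorem2.1} that realization must be the glued polyhedron just described. All other dihedral angles are inherited from $\mathcal P_1$ or $\mathcal P_2$, hence equal $\pi/2$, and all vertices remain finite; so $\mathcal P$ is compact right-angled.

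Volume additivity is immediate: $\mathcal P_1$ and $\mathcal P_2$ meet only in the pentagon $F$, which has zero three-dimensional measure, so $\vol(\mathcal P) = \vol(\mathcal P_1) + \vol(\mathcal P_2)$.

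For the vertex count, each vertex of $F$ is a $3$-valent vertex in both $\mathcal P_1$ and $\mathcal P_2$. After gluing, the two lateral edges emanating from it (one from each side) lie on a common geodesic orthogonal to $\Pi$ and merge into one edge, while the two edges of $F$ through the vertex disappear. So the point is no longer a vertex of $\mathcal P$; it lies in the interior of an edge. This matches the requirement that $\mathcal P$ be $3$-valent. Thus the $5+5$ vertices of $F_1$ and $F_2$ are lost and all other vertices survive, giving $\ver(\mathcal P) = \ver(\mathcal P_1)+\ver(\mathcal P_2)-10$. As a consistency check, for $\mathcal L(5)\#\mathcal L(5)$ this gives $30$ vertices, matching $\mathcal L_2(5)$ with $2\cdot 5\cdot 3=30$.
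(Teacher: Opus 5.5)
Your argument is correct and is essentially the paper's. The lateral faces along the glued pentagon become coplanar and merge across a flat angle $\pi$. Each of the ten vertices of $F_1$ and $F_2$ becomes an interior point of a merged edge, and volume adds because the gluing face has measure zero. You go further than the paper by addressing convexity, which it passes over.

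For that step, keep your half-space argument. Its real input is that non-adjacent faces of a compact right-angled polyhedron have disjoint planes. Hence every face plane of $\mathcal P_1$ other than $F_1$ and its five neighbors lies in the open half-space on the $\mathcal P_1$ side of $\Pi$, and so its half-space contains $\mathcal P_2$.

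Drop the fallback via \cite[Th.~4.4]{In08} plus uniqueness, because it is circular. Uniqueness identifies $\mathcal P_1\cup\mathcal P_2$ with the realization of the abstract composition only once you already know the union is a convex right-angled polyhedron. Inoue's composition is purely combinatorial, and its realization is in general not such a union, which is why \cite[Th.~6.4]{In08} gives only an inequality.
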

\begin{proof}
		When right-angled polyhedra are glued, the faces adjacent to $F_1$ and $F_2$ pairwise form flat dihedral angles of size $\pi$ and merge into complete geodesic hyperfaces. The right angles with the remaining faces are preserved; hence $\mathcal P_1 \# \mathcal P_2$ is again a right-angled polyhedron.
		In a compact right-angled polyhedron, all vertices are 3-valent. Each vertex of a gluing face is formed by the intersection of this face itself and two adjacent faces perpendicular to it. Upon gluing, the face goes into the interior, while the two adjacent faces merge smoothly. Consequently, the point of their intersection becomes an interior point of an edge and ceases to be a vertex. Since the two glued pentagonal faces contain a total of $10$ vertices, all of them disappear from the 1-skeleton under identification, which gives the desired formula. The lemma is proved.
\end{proof}

	At the same time, the modified number of vertices behaves additively:  
	$$ 
	\tN(\mathcal P_1 \# \mathcal P_2) = (\ver(\mathcal P_1) + \ver(\mathcal P_2) - 10) - 10 = \tN(\mathcal P_1) + \tN(\mathcal P_2).
	$$
		Consequently, the modified normalized volume of a connected sum along pentagonal faces is a weighted average:
	\begin{equation}
		\tomega(\mathcal P_1 \# \mathcal P_2) = \frac{\tN(\mathcal P_1) \cdot \tomega(\mathcal P_1) + \tN(\mathcal P_2) \cdot \tomega(\mathcal P_2)}{\tN(\mathcal P_1) + \tN(\mathcal P_2)}.
	\end{equation}
	
		Consider the successive gluing of $m$ copies of the same polyhedron $\mathcal P$. Denote the resulting polyhedron by $\#_m \mathcal P$.
	Then $\tomega(\#_m \mathcal P) = \tomega(\mathcal P)$, since this is the average of identical quantities.
		The relation between the usual and modified normalized volumes is described by the formula
	$$
	\omega(\#_m \mathcal P) = \frac{\vol(\#_m \mathcal P)}{\ver(\#_m \mathcal P)} = \frac{\vol(\#_m \mathcal P)}{\tN(\#_m \mathcal P) + 10} = \tomega(\#_m \mathcal P ) \frac{\tN(\#_m \mathcal P)}{\tN(\#_m \mathcal P)+10}.
	$$
	Since $\tN(\#_m \mathcal  P) = m \cdot \tN(\mathcal  P) \to \infty$ as $m \to \infty$, the fraction on the right tends to $1$.
	Consequently,
	$$ 
	\lim_{m \to \infty} \omega(\#_m \mathcal P) = \tomega(\#_m \mathcal P) = \tomega(\mathcal P).
	$$
		This means that every value of the modified normalized volume $\tomega(\mathcal P)$ is a limit point for the spectrum of the usual normalized volumes $\omega$.
		Thus, in order to approximate an arbitrary value in the interval $[\frac{5\vtet}{16}, \frac{5\vtet}{8}]$ by normalized volumes, it suffices to approximate it by modified normalized volumes.

	Let $\mathcal P_1$ and $\mathcal P_2$ be two polyhedra from $\mathcal R_{comp}$.
	Let $\tomega(\mathcal P_1) = A$ and $\tomega(\mathcal P_2) = B$, where $A<B$.
		Consider the polyhedron $\#_k \mathcal P_1 \#_{\ell} \mathcal P_2$, which is the connected sum of $k$ copies of $\mathcal P_1$ and $\ell$ copies of $\mathcal P_2$ along pairwise isometric pentagonal faces. Then
	$$
	\tomega(\#_k \mathcal P_1 \#_{\ell} \mathcal P_2) = \frac{k \cdot \tN(\mathcal  P_1) \cdot A + \ell \cdot \tN(\mathcal P_2) \cdot B}{k \cdot \tN(\mathcal P_1) + \ell \cdot \tN(\mathcal P_2)}.
	$$
	
		For arbitrary $\alpha \in (0,1)$, choose sequences of integers $\{k_i\}$ and $\{ \ell_i\}$ such that
	$$
	\lim_{i\to \infty} \frac{k_i}{\ell_i} = \frac{\alpha \cdot \tN(\mathcal P_2)}{(1-\alpha) \cdot \tN(\mathcal P_1)}.
	$$
	Then 
	$$
	\lim_{i \to \infty} \tomega(\#_{k_i} \mathcal P_1 \#_{\ell_i} \mathcal P_2) = \alpha \cdot A + (1-\alpha) \cdot B.
	$$
	Consequently, the set of values of $\tomega$ is dense in the interval between any two values $\tomega(\mathcal P_1)$ and $\tomega(\mathcal P_2)$.

	Now we pass to concrete families of polyhedra.
		To realize a connected sum, we need polyhedra with isometric pentagonal faces.
		Consider the L\"obell polyhedra $\mathcal L(n)$ and their towers $\mathcal L_k(n)$.
		For fixed $n$, all lateral pentagonal faces of these polyhedra are isometric to each other. Denote the set of these pentagonal faces by $\mathcal{F}_n$.
		The polyhedra $\mathcal L(n)$ and $\mathcal L_k(n)$ have $2n$ pairwise isometric pentagonal faces each.

		Moreover, observe that if arbitrary polyhedra $\mathcal P_1$ and $\mathcal P_2$ have, respectively, $p_1$ and $p_2$ pairwise isometric pentagonal faces from the set $\mathcal{F}_n$, then the polyhedron $\mathcal P_1 \# \mathcal P_2$ obtained as the connected sum along a pair of such faces has at least $p_1 + p_2 - 12$ isometric pentagonal faces of the class $\mathcal{F}_n$. Indeed, under gluing, the two faces $F_1 \subset \mathcal P_1$ and $F_2 \subset \mathcal P_2$ go into the interior of the polyhedron. In addition, the faces incident to the glued faces may change their combinatorial type, and thereby cease to belong to the class $\mathcal{F}_n$; there are exactly $5$ such faces in each of the polyhedra. In the worst case, all these $12$ faces belonged to $\mathcal{F}_n$ and were lost.

		Below we assume that $n>6$. Then, after several connected-sum operations applied to copies of the polyhedra $\mathcal L(n)$ and $\mathcal L_k(n)$ along pentagonal faces from the set $\mathcal{F}_n$, the resulting polyhedron will have no fewer pentagonal faces of the class $\mathcal{F}_n$ than $\mathcal L(n)$ and $\mathcal L_k(n)$ have.
		This makes it possible to repeat the connected-sum operation on copies of the polyhedra $\mathcal L(n)$ and $\mathcal L_k(n)$ as many times as required.

		First, note that, by formula~(\ref{eqn6}), the endpoints of the interval of interest $\left[\frac{5}{16} \vtet, \frac{5}{8} \vtet \right]$ can be obtained as limits of sequences of modified volumes of the polyhedra $\mathcal L(n)$ (as $n$ tends to infinity) and $\mathcal L_k(n)$ (as $n$ and $k$ tend to infinity).
	
		To complete the proof, observe that, by formula~(\ref{eqn6}), for arbitrarily small $\varepsilon > 0$ there exist natural numbers $k_{\varepsilon}$ and $n_{\varepsilon}$ such that
	$$
	\tomega(\mathcal L(n_{\varepsilon}))  \in [\tfrac{5\vtet}{16}, \tfrac{5\vtet}{16} + \varepsilon], \qquad
	\tomega(\mathcal L_{k_{\varepsilon}}(n_{\varepsilon}))  \in [\tfrac{5\vtet}{8} - \varepsilon, \tfrac{5\vtet}{8}].
	$$ 
		Thus, we can construct connected sums of the polyhedra $\mathcal L(n_{\varepsilon})$ and $\mathcal L_{k_{\varepsilon}}(n_{\varepsilon})$ of the form
	$$
	\#_m \mathcal L(n_{\varepsilon}) \#_s \mathcal L_{k_{\varepsilon}}(n_{\varepsilon})
	$$
		and apply the procedure described above to approximate every number in the interval $[\tfrac{5\vtet}{16} + \varepsilon, \tfrac{5\vtet}{8} - \varepsilon]$.
	The proof of Theorem~\ref{theorem2} is complete. 
\end{proof}
		
\section{Comparison of spectra and open questions} \label{comparison}

As we mentioned in the Introduction, there is a close relation between volumes of fully augmented links and volumes of ideal right-angled polyhedra. It is known (see~\cite{La04, Pur2011, VeEg24}) that the complement of every fully augmented hyperbolic link $L \subset S^3$ can be cut into two ideal right-angled polyhedra $\mathcal P$ isometric to each other.
In this case, the following relations hold:
\begin{equation} \label{eqn900}
\vol(L) = 2\vol(\mathcal P), \qquad \ver(\mathcal P) = 3  a(L),
\end{equation}
where $a(L)$ is the number of augmentations (vertical components) of the link.
	
Kwon and Tham~\cite{KwonTham} studied the spectrum of the volume density $\operatorname{vd}(L)$ of a fully augmented link $L$, which is defined as follows: $\operatorname{vd}(L) = \vol(L)/a(L)$.
We express this quantity in terms of the normalized volume of the polyhedron $\mathcal P$ corresponding to the link $L$, using relations~(\ref{eqn900}):
$$
\operatorname{vd}(L) = \frac{2\vol(\mathcal P)}{ \ver(\mathcal P)/3} = 6 \frac{\vol(\mathcal P)}{\ver(\mathcal P)} = 6 \, \omega(\mathcal P).
$$
It was shown in~\cite{KwonTham} that the spectrum of $\operatorname{vd}(L)$ is contained in the interval $\left[ \voct, 10\vtet \right]$, is discrete in $\left[ \voct, 2\voct \right)$, and is dense in $\left[ 2\voct, 10\vtet \right]$. Recalculating these intervals for $\omega(\mathcal P)$ by division by $6$, we conclude that, for the subclass of polyhedra corresponding to fully augmented links, the spectrum lies in the interval $\left[\frac{1}{6} \voct, \frac{5}{3} \vtet \right]$, is discrete in $\left[ \frac{1}{6} \voct, \frac{1}{3} \voct \right)$, and is dense in $\left[ \frac{1}{3} \voct, \frac{5}{3} \vtet \right]$.
	
Comparing the spectrum $\Omega(\textrm{FAL})$ of volume densities of augmented links with the spectrum $\Omega(\mathcal R_{ideal})$ of normalized volumes of ideal right-angled polyhedra, we note the following facts (Fig.~\ref{fig10}).
\begin{itemize}
\item[(1)] The lower bounds of the spectra coincide and are equal to $d = \frac{1}{6} \voct$.
\item[(2)] The upper bound of the spectrum $\Omega(\mathcal R_{ideal})$ is equal to $k=\frac{1}{2} \voct$, which exceeds the upper bound $h= \frac{5}{3}\vtet$ of the spectrum $\Omega(\textrm{FAL})$. This difference is explained by the fact that the class of all ideal right-angled polyhedra is substantially broader than the class of polyhedra arising from link diagrams. For example, in polyhedra arising from links, every vertex is incident to two triangles which intersect exactly at this vertex.
\item[(3)] The structures of the spectra qualitatively coincide: first comes a discrete part and then a dense part. The transition from the discrete part to the dense part in the case of $\Omega(\mathcal R_{ideal})$ occurs at the value $f = \frac{1}{4} \voct$, whereas in the case of $\Omega(\textrm{FAL})$ it occurs at the value $g = \frac{1}{3} \voct$.
\item[(4)] As can be seen from the proof of Theorem~\ref{theorem1} and Table~\ref{table1}, the only normalized volume from $\Omega(\mathcal R_{ideal})$ that falls into the dense part of the spectrum $\Omega(\mathcal R_{comp})$ is $d = \frac{1}{6} \voct = 0.610643 \in [d, e)$, corresponding to the octahedron.
\end{itemize}	
\begin{figure}[ht]
    \centering
    \setlength{\unitlength}{1.0mm}
    \begin{tikzpicture}[scale=1.2] 
    \draw[thick, black] (0,0) -- (8,0); 
\coordinate (A1) at (0.2424, 0.2) ;
\coordinate (B1) at (0.2424, 0.8) ;
\coordinate (C1) at (0.43, 0.2) ;
\coordinate (D1) at (0.43, 0.8) ;
\coordinate (E1) at (1.24, 0.2) ;
 \coordinate (F1) at (1.24, 0.8) ;
  \coordinate (G1) at (2.52, 0.2) ;
    \coordinate (H1) at (2.52, 0.8) ; 
    \filldraw[cyan]  (A1) -- (B1) -- (D1) -- (C1) -- cycle;
    \filldraw[yellow]  (C1) -- (D1) -- (F1) -- (E1) -- cycle;
   \filldraw[pink]  (E1) -- (F1) -- (H1) -- (G1) -- cycle;        
\draw[thick, dashed, red] (0.2424,0.8) -- (0.2424,0); 
\draw[thick, dashed, red] (0.43,0.8) -- (0.43,0); 
\draw[thick, dashed, red] (1.24,0.8) -- (1.24,0); 
\draw[thick, dashed, red] (2.52,0.8) -- (2.52,0); 
\coordinate (A2) at (2.42, 1.2) ;
\coordinate (B2) at (2.42, 1.8) ;
\coordinate (C2) at (3.66, 1.2) ;
\coordinate (D2) at (3.66, 1.8) ;
\coordinate (E2) at (7.32, 1.2) ;
 \coordinate (F2) at (7.32, 1.8) ;
    \filldraw[cyan]  (A2) -- (B2) -- (D2) -- (C2) -- cycle;
    \filldraw[pink]  (C2) -- (D2) -- (F2) -- (E2) -- cycle;
\draw[thick, dashed, red ] (7.32,1.8) -- (7.32,0); 
\draw[thick, dashed, red] (3.66,1.8) -- (3.66,0); 
\draw[thick, dashed, red] (2.42,1.8) -- (2.42,0); 
\coordinate (A3) at (2.42, 2.2) ;
\coordinate (B3) at (2.42, 2.8) ;
\coordinate (C3) at (4.84, 2.2) ;
\coordinate (D3) at (4.84, 2.8) ;
\coordinate (E3) at (6.76, 2.2) ;
 \coordinate (F3) at (6.76, 2.8) ;
    \filldraw[cyan]  (A3) -- (B3) -- (D3) -- (C3) -- cycle;
    \filldraw[pink]  (C3) -- (D3) -- (F3) -- (E3) -- cycle;
\draw[thick, dashed, red] (6.76,2.8) -- (6.76,0); 
\draw[thick, dashed, red] (4.84,2.8) -- (4.84,0); 
\draw[thick, dashed, red] (2.42,2.8) -- (2.42,0); 
\draw(0.2424,-0.24) node {$a$};
\draw(0.43,-0.2) node {$b$};
\draw(1.24,-0.24) node {$c$};
\draw(2.38,-0.2) node {$d$};
\draw(2.56,-0.24) node {$e$};
\draw(3.66,-0.24) node {$f$};
\draw(4.84,-0.24) node {$g$};
\draw(6.76,-0.2) node {$h$};
\draw(7.32,-0.2) node {$k$};
   \draw(-1,0.5) node {$\Omega(\mathcal R_{comp})$};
      \draw(-1,1.5) node {$\Omega(\mathcal R_{ideal})$};
         \draw(-1,2.5) node {$\Omega(\textrm{FAL})$};
    \end{tikzpicture}
    \caption{Comparison of spectra of normalized volumes.}
    \label{fig10}   
\end{figure}
The corresponding intervals are shown in Fig.~\ref{fig10}, where the indicated quantities have the following values: 
$$
\begin{gathered}
a = \frac{5}{192} \voct=0.095413, \qquad b = \frac{1}{32}\voct=0.114495, \qquad c=\frac{5}{16}\vtet = 0.317169, \cr 
d = \frac{1}{6} \voct = 0.610643, \qquad e=\frac{5}{8} \vtet = 0.634338, \qquad f = \frac{1}{4} \voct = 0.915965, \cr 
g = \frac{1}{3} \voct = 1.221287, \qquad h = \frac{5}{3} \vtet = 1.691569, \qquad k=\frac{1}{2} \voct = 1.831931.
\end{gathered}  
$$	
In the color version of Fig.~\ref{fig10}, the regions of the discrete part of the spectrum are marked in blue, the regions of the dense part of the spectrum in pink, and the region that remains unexplored in yellow.
		
In conclusion, we formulate several open questions and one conjecture.
	
\begin{question} \label{question2}
{\rm 
What is the exact lower bound of the spectrum $\Omega(\mathcal R_{comp})$ of normalized volumes of compact right-angled hyperbolic polyhedra? The bound $a = \frac{5}{192} \voct$ indicated in the present paper follows from formula~(\ref{eqn7}) and Table~\ref{table2}, but it is not sharp.
}
\end{question}
	
\begin{question} \label{question3}
{\rm What is the structure of the spectrum $\Omega(\mathcal R_{comp})$ of normalized volumes of compact right-angled hyperbolic polyhedra on the interval $\left[ b, c \right] = \left[ \frac{1}{32} \voct, \frac{5}{16} \vtet \right]$?}
\end{question}

Our conjecture is as follows. 
\begin{conjecture}
{\rm The spectrum $\Omega(\mathcal R_{comp})$ of normalized volumes of compact right-angled polyhedra lies in the interval $\left[ \frac{\vol(\mathcal L(5))}{20}, \frac{5}{8} \vtet \right]$, with both bounds sharp. Moreover, the spectrum $\Omega(\mathcal R_{comp})$ is discrete in the interval $\left[ \frac{\vol(\mathcal L(5))}{20}, \frac{5}{16} \vtet \right]$ and dense in $\left[ \frac{5}{16} \vtet, \frac{5}{8} \vtet \right]$.}
\end{conjecture}

\begin{question} \label{question4}
 {\rm What is the structure of the spectrum of normalized volumes of right-angled hyperbolic polyhedra admitting both finite and ideal vertices? It was shown in~\cite{VeEg25} that the smallest volume in this class of polyhedra is equal to $\frac{1}{4} \voct$ and is attained on the triangular bipyramid, which has two finite and three ideal vertices.}
\end{question}

Comparing the volume of the antiprism $\mathcal A(4)$ and the volume of the L\"obell polyhedron $\mathcal L(6)$, one can observe that, to six decimal places, both volumes are equal to $6.023046$. Moreover, the volumes of $\mathcal A(4)$ and $\mathcal L(6)$ computed to 50 digits after the decimal point again coincide and are equal to
$$
6.02304602004718882363418931461679711549802902472249\ldots
$$
Is it true that the equality $\vol (\mathcal A(4)) = \vol (\mathcal L(6))$ holds? Since both volumes are expressed in terms of the Lobachevsky function, this question reduces to the question of whether the following equality holds:
\begin{equation} \label{eqn8}
3 \left[ 2 \Lambda (\theta_6)  +  \Lambda \left( \theta_6 + \frac{\pi}{6} \right)  +  \Lambda \left( \theta_6 - \frac{\pi}{6} \right)  +  \Lambda \left( \frac{\pi}{2} - 2 \theta_6 \right) \right] 
= 8 \Lambda \left( \frac{3 \pi}{8} \right) + 8  \Lambda \left(\frac{\pi}{8}\right),   
\end{equation} 
where $\theta_6  =  \frac{\pi}{2}  -  \arccos  \left(\frac{1}{\sqrt{3}} \right)$.
Observe that the polyhedron assembled from two ideal right-angled antiprisms $\mathcal A(4)$ is known as the cuboctahedron $Q$ (see, for instance, Fig.~3 in~\cite{VeEg25}). A formula for $\vol(Q)$ was obtained in~\cite[Lem.~5.5]{Ad26} by decomposing $Q$ into ideal tetrahedra. It follows from this formula that, for $\phi=\arctan(\sqrt{2})$,
\begin{equation} \label{eqn9}
4  \Lambda \left(\frac{\pi}{2} - \phi \right)  + 8 \Lambda \left( \phi \right)  - 3  \Lambda \left( 2 \phi \right)  +  \frac{1}{2} \Lambda \left( 4 \phi \right) 
= 8 \Lambda \left( \frac{3 \pi}{8} \right) + 8  \Lambda \left(\frac{\pi}{8}\right),   
\end{equation} 

\begin{question} \label{question1}
{\rm 
Does equality (\ref{eqn8}) hold? In particular, does it follow from~(\ref{eqn9})? 
}
\end{question}

\end{document}